\documentclass[a4paper,11pt]{article}
\usepackage[english]{babel}
\usepackage[centertags]{amsmath}
\usepackage{amsfonts}
\usepackage{amssymb}
\usepackage{amsthm}

\newtheorem{theorem}{Theorem}[section]
\newtheorem{lemma}[theorem]{Lemma}
\newtheorem{proposition}[theorem]{Proposition}
\newtheorem{cor}[theorem]{Corollary}

\theoremstyle{definition}
\newtheorem{definition}[theorem]{Definition}
\newtheorem{example}[theorem]{Example}

\theoremstyle{remark}
\newtheorem{remark}[theorem]{Remark}

\numberwithin{equation}{section}

\begin{document}
\begin{center}
\Large{\textbf{Graded gamma rings}}
\end{center}
\begin{center}
\textbf{Emil Ili\'{c}-Georgijevi\'{c}}
\end{center}
\begin{abstract}
\noindent We introduce graded gamma rings from a more general
point of view via methods developed by Krasner and Halberstadt for
graded rings. We propose three equivalent aspects of studying
graded gamma rings, nonhomogeneous, semihomogeneous and
homogeneous. The graded Jacobson radical of a graded gamma ring is
introduced and its elementwise description is given. Also, a
relation between the graded Jacobson radical and the Jacobson
radical of a graded gamma ring is examined.
\end{abstract}
\noindent\emph{Keywords}: Graded gamma rings and modules, Gamma
anneids and moduloids, Jacobson radical
\\ 2010 \emph{Mathematics Subject Classification} Primary 16Y99, Secondary 16W50, 16N20
\section{Introduction}\label{intro}
\noindent Gamma rings were introduced by Nobusawa \cite{nob} as an
algebraic tool for observing the relationship between the groups
of homomorphisms $\hom(B,C)$ and $\hom(C,B)$ of commutative groups
$B$ and $C.$ Let us recall the notion of a $\Gamma$-ring and a
$\Gamma$-ring in the sense of Nobusawa, following \cite{wb}.\\
If $R$ and $\Gamma$ are abelian additive groups, then $R$ is
called a \emph{$\Gamma$-ring} if, for all $x, y, z\in R$ and
$\alpha, \beta\in\Gamma,$ the following conditions are satisfied:
\begin{itemize}
    \item[$i)$] $x\alpha y\in R;$
    \item[$ii)$] $(x+y)\alpha z=x\alpha z+y\alpha z,$ $x(\alpha+\beta)z=x\alpha z+x\beta
    z,$ $x\alpha(y+z)=x\alpha y+x\alpha z;$
    \item[$iii)$] $(x\alpha y)\beta z=x\alpha(y\beta)z.$
\end{itemize}
It is called a \emph{$\Gamma$-ring in the sense of Nobusawa} if
moreover we have:
\begin{itemize}
    \item[$i')$] $\alpha x\beta\in\Gamma;$
    \item[$ii')$] $(x\alpha y)\beta z=x(\alpha y\beta)z=x\alpha(y\beta)z;$
    \item[$iii')$] if $x\alpha y=0$ for all $x,y\in R,$ then $\alpha=0.$
\end{itemize}
Later on, many mathematicians gave their
contributions to the theory of gamma rings, and mainly concerning
radical theory (see e.g. \cite{bg, cl, rs, ks, dw} and references
therein). In \cite{flin} a group graded gamma ring is defined to
be a $\Gamma$-ring $R$ which is the direct sum of additive
subgroups $R_g,$ $g\in K,$ such that $R_g\Gamma R_h\subseteq
R_{gh},$ for all $g, h\in K,$ where $K$ is a group. However, the
following example motivates us to generalize the previous notion.
Let us suppose we have graded $S$-modules $B$ and $C,$
$B=\bigoplus_{x\in K}B_x,$ $C=\bigoplus_{x\in K}C_x,$ where $S$ is
a $K$-graded ring, $K$ a group and let
\[R=\mathrm{HOM}_S(B,C)=\bigoplus_{x\in K}\mathrm{HOM}_S(B,C)_x,\]
where $\mathrm{HOM}_S(B,C)_x$ is a group of graded morphisms of
degree $x$ \cite{naoy}, that is, of $S$-linear mappings
$\varphi:B\to C$ such that $\varphi(B_g)\subseteq C_{xg},$ for all
$g\in K.$ Similarly, observe $\Gamma=\mathrm{HOM}_S(C,B).$
Clearly, $R$ is a $\Gamma$-ring. If $\varphi_1\in R_x,$
$\varphi_2\in R_z$ and $\psi\in \Gamma_y,$ then
$\varphi_1\psi\varphi_2(B_g)\subseteq
\varphi_1(\psi(C_{zg}))\subseteq \varphi_1(B_{yzg})\subseteq
C_{xyzg},$ and hence, $\varphi_1\psi\varphi_2\in R_{xyz}.$ So, one
way to extend the notion of a group graded gamma ring is to
require
\begin{equation}\label{mu}
R_g\Gamma_fR_h\subseteq R_{gfh},
\end{equation}
for all $f,g,h\in K,$ if $\Gamma=\bigoplus_{g\in K}\Gamma_g.$
However, if we replace \eqref{mu} with
\begin{equation}\label{mu1}
(\exists k\in K)\ R_g\Gamma_fR_h\subseteq R_k,
\end{equation}
for all $f,g,h\in K,$ the additive gradings $R=\bigoplus_{g\in
K}R_g$ and $\Gamma=\bigoplus_{g\in K}\Gamma_g$ of $R$ and
$\Gamma,$ respectively, and the structure of a $\Gamma$-ring $R$
will imply operation (generally partial) in $K.$ Indeed, if
$R_g\Gamma_fR_h\neq\{0\},$ then $k\in K,$ for which
$R_g\Gamma_fR_h\subseteq R_k,$ is unique, and is arbitrary if
$R_g\Gamma_fR_h=\{0\}.$ So, if $R_g\Gamma_fR_h\neq\{0\},$ we may
define $gfh:=k.$ Of course, for those $g,f,h\in K$ for which
$R_g\Gamma_fR_h=\{0\},$ this ternary operation may be defined
arbitrarily in order to make it defined everywhere on $K.$ So,
there is no need to assume anything of $K$ except of being a
nonempty set. We do not stop here, but go further by assuming
different grading sets of $R$ and $\Gamma,$ namely
$R=\bigoplus_{g\in K}R_g$ and $\Gamma=\bigoplus_{h\in H}\Gamma_h.$
In that case, condition
\begin{equation}\label{mu2}
(\forall g, g'\in K)(\forall h\in H)(\exists k\in K)\
R_g\Gamma_hR_{g'}\subseteq R_k
\end{equation}
will make $K$ a partial $\Gamma$-ring, where $\Gamma=H,$ which of
course, can be extended to a $\Gamma$-ring arbitrarily.\\
Let us call the unique $\sigma\in K,$ for which $0\neq
x\in\bigcup_{g\in K}R_g$ belongs to $R_\sigma,$ the \emph{degree}
of $x$ and denote it by $\delta(x).$ The degree of $0\neq
\gamma\in\bigcup_{h\in H}\Gamma_h$ will be denoted by $d(\gamma).$
Sets $A=\bigcup_{g\in K}R_g$ and $G=\bigcup_{h\in H}\Gamma_h$ are
called \emph{homogeneous parts} of $R$ and $\Gamma,$ respectively.
Now, \eqref{mu2} may be interpreted as $\delta(x\gamma y)=ghg',$
where $\delta(x)=g,$ $d(\gamma)=h,$ $\delta(y)=g'$ if
$R_g\Gamma_hR_{g'}\neq\{0\},$ since, we will prove that
$\delta(x\gamma y)$ depends only on $\delta(x),$ $d(\gamma)$ and
$\delta(y)$ if $x\gamma y\neq0.$\\
This discussion is inspired by the similar observations made for
graded groups, rings and modules from \cite{agg} (see also
\cite{cha,ha}).\\
There are many important examples of graded
gamma rings in our sense which are not necessarily group graded
gamma rings. Among them are the following ones.\\
1. Set of rectangular matrices over a division ring.\\
Let $D$ be a division ring and $M=M_{m,n}(D)$ the set of all
matrices of type $m\times n.$ $M$ is a commutative group with
respect to addition of matrices. We know that $M$ is a
$\Gamma$-ring with $\Gamma=M$ if for $x,y,\alpha\in M$ we define
$x\alpha y=x\alpha^Ty,$ where $\alpha^T$ is the transpose of a
matrix $\alpha$ \cite{nob,rs}. However, $M$ is also a graded
$\Gamma$-ring in our sense. Indeed, $M=\bigoplus_{(i,j)\in
I_m\times I_n} M_{i,j},$ $I_m=\{1,\dots,m\},$ $I_n=\{1,\dots,n\},$
where $M_{i,j}$ is the set of matrices over $D$ whose
$(i,j)$-entry is from $D$ and all other entries are zero. Also,
for all pairs $(i,j),$ $(k,l)$ and $(p,q)$ we have
$M_{i,j}M_{k,l}^TM_{p,q}\subseteq M_{r,s},$ for some pair $(r,s).$\\
2. Generalized matrix rings.\\
Let $R,$ $S$ be rings and $V,$ $W$ be an $(R,S)$-bimodule and a
$(S,R)$-bimodule, respectively. Also, let us assume that products
$V\times W$ to $R$ and $W\times V$ to $S,$ denoted by $vw$ and
$wv$ for all $v\in V$ and $w\in W,$ are defined in such a way that
the system $M=(R,V,W,S)$ with these products forms a
Morita context.\\
Let $R(M)=\left(%
\begin{array}{cc}
  R & V \\
  W & S \\
\end{array}%
\right)$ be the set of all $2\times2$-matrices of the form $\left(%
\begin{array}{cc}
  r & v \\
  w & s \\
\end{array}%
\right),$ where $r\in R,$ $s\in S,$ $v\in V,$ $w\in W.$\\
The set $R(M)$ forms a ring under the usual definitions of matrix
addition and multiplication, which is called \emph{generalized
matrix ring} determined by the Morita context $M.$\\
Note that $R(M)$ is a graded $\Gamma$-ring in our sense if we put $\Gamma=\left(%
\begin{array}{cc}
  R & 0 \\
  0 & 0 \\
\end{array}%
\right)\oplus\left(%
\begin{array}{cc}
  0 & 0 \\
  0 & S \\
\end{array}%
\right).$
Indeed, if $R_1=\left(%
\begin{array}{cc}
  R & 0 \\
  0 & 0 \\
\end{array}%
\right),$
$R_2=\left(%
\begin{array}{cc}
  0 & V \\
  0 & 0 \\
\end{array}%
\right),$
$R_3=\left(%
\begin{array}{cc}
  0 & 0 \\
  W & 0 \\
\end{array}%
\right),$
$R_4=\left(%
\begin{array}{cc}
  0 & 0 \\
  0 & S \\
\end{array}%
\right),$ then $R(M)=R_1\oplus R_2\oplus R_3\oplus R_4$ and
$\Gamma=R_1\oplus R_4,$ and for all $i,j\in\{1,2,3,4\}$ and
$k\in\{1,4\}$ there exists $l\in\{1,2,3,4\}$ such that
$R_iR_kR_j\subseteq R_l.$\\
3. Semidirect sums of rings.\\
If $R$ is a ring which contains a subring $S$ and ideal $I$ such
that $R=S\oplus I,$ then $R$ is called the
\emph{semidirect sum} of $S$ and $I$ (see e.g. \cite{ak}).\\
Let $R_1=S$ and $R_2=I.$ Then, by putting $\Gamma=S,$ we get that
$R=R_1\oplus R_2$ is a graded $\Gamma$-ring since $RSR\subseteq
R,$ $R_2\Gamma R_2=ISI\subseteq I=R_2,$ $R_2\Gamma
R_1=ISS\subseteq I=R_2,$ $R_1\Gamma
R_2=SSI\subseteq I=R_2,$ $R_1\Gamma R_1=SSS\subseteq S=R_1.$\\
In particular, the Dorroh extension may be regarded as a graded
gamma ring in our sense.\\
Theory we are about to establish covers the theory of ordinary
$\Gamma$-rings and of group graded $\Gamma$-rings. Also, as
$\Gamma$-rings generalize the concept of a ring, graded
$\Gamma$-rings will provide a generalization of a graded ring.
Before observing graded $\Gamma$-rings, we deal with preliminaries
regarding Krasner's definition of a grading for groups, rings and
modules \cite{cha,ha,agg}. Kelarev \cite{ak1} and Kelarev and
Plant \cite{ak2} also studied this approach to gradings of rings,
namely, $S$-graded rings inducing $S.$ If $R$ is a graded
$\Gamma$-ring, $A$ the homogeneous part of $R$ and $G$ the
homogeneous part of $\Gamma,$ we prove that a $\Gamma$-ring $R$ is
determined by $A$ and $G.$ We also define and give an elementwise
description of the graded Jacobson radical of a graded
$\Gamma$-ring $R$ in terms of $G$ and $A.$ Finally, the comparison
between graded Jacobson and Jacobson radicals of graded
$\Gamma$-rings is given.
\section{Preliminaries}
\noindent Our notion of a graded gamma ring is inspired by the
notion of a grading due to Krasner, the origin of which goes back
to \cite{ug}. This notion in case of rings coincides with the
notion of an $S$-graded ring inducing $S,$ studied by Kelarev in
\cite{ak1} and Kelarev and Plant in \cite{ak2}. In this section,
we gather up all notions and results due to Krasner which are
essential for the rest of the article. More on this theory can be
found in \cite{agg} and references therein (in particular,
\cite{cha} and \cite{ha}) or \cite{sp,mv}. Throughout this
section, $G$ denotes a multiplicative group with the neutral
element $1$ and $\Delta$ is a nonempty set. Even when the
operation is denoted multiplicatively, we will use $\bigoplus$ to
denote the (restrictive) direct product of groups as if it were a
(restrictive) direct sum in question.
\begin{definition}[\cite{agg}]
Every mapping
\begin{equation}\label{1g1}
\gamma:\Delta\to\mathrm{Sg}(G),\ \gamma(\delta)=G_\delta\
(\delta\in\Delta),
\end{equation} such that
$G=\bigoplus_{\delta\in\Delta}G_\delta,$ where $\mathrm{Sg}(G)$ is
the set of all subgroups of $G,$ is called a \emph{grading}. A
group with a grading is called \emph{a graded group}. A grading is
called \emph{strict} if $G_\delta\neq\{1\},$ for all
$\delta\in\Delta.$ If $\Delta^*=\{\delta\in\Delta\ |\
G_\delta\neq\{1\}\},$ then $\gamma^*=\gamma|_{\Delta^*}$ is a
strict grading of $G$ and is called a \emph{strict kernel} of
grading \eqref{1g1}. Elements $\delta\in\Delta$ are called
\emph{degrees} and the corresponding $G_\delta$ are called
\emph{homogeneous components}. The set
$H=\bigcup_{\delta\in\Delta}G_\delta$
($=\bigcup_{\delta\in\Delta^*}G_\delta$ if $G\neq\{1\}$) is called
the \emph{homogeneous part} of a graded group $G$ and elements
$x\in H$ are called \emph{homogeneous}. The unique
$\xi\in\Delta^*$ for which $1\neq x\in H$ belongs to $G_\xi$ is
called the \emph{degree of $x$} and is denoted by $\delta(x).$
\end{definition}
\noindent Element $1$ generally speaking does not have a degree
and $\delta\in\Delta\setminus\Delta^*$ are called \emph{empty
degrees} \cite{agg}. However, we may associate a degree from
$\Delta\setminus\Delta^*$ to $1,$ which we denote by $0$ and call
it a \emph{zero degree} \cite{agg}. If $\Delta=\Delta^*\cup\{0\},$
and if we put $\delta(1)=0,$ the grading is called \emph{proper}
\cite{agg}.
Throughout this paper we assume all gradings to be proper.\\
Let $G$ be a graded group with grading \eqref{1g1} and let
\begin{equation}\label{1g2}
\gamma':\Delta'\to\mathrm{Sg}(G), \gamma'(\delta')=G_{\delta'}\
(\delta'\in\Delta')
\end{equation}
be another grading of $G.$ We say that gradings \eqref{1g1} and
\eqref{1g2} are \emph{equivalent} \cite{agg} if there exists a
bijective mapping $\varphi:\Delta\to\Delta'$ such that
$\gamma(\delta)=\gamma'(\varphi(\delta)),$ for every
$\delta\in\Delta.$ These gradings are \emph{weakly equivalent}
\cite{agg} if their strict kernels are equivalent. It is clear
that strict gradings are determined up to an equivalence by the
set $\{G_\delta\ |\ \delta\in\Delta\},$ while gradings are
determined up to a weak equivalence by $\{G_\delta\ |\
\delta\in\Delta^*\}.$ The homogeneous part $H$ of a graded group
$G,$ together with the group structure of $G,$ determines a
grading of $G$ up to a weak equivalence, for if $1\neq a, b\in H$
and $a\in G_\delta,$ then $b\in G_\delta$ if and only if $ab\in
H,$ which is to say that the set $\{G(a):=\{x\in H\ |\ ax\in H\}\
|\ a\in H^*=H\setminus\{1\}\}$ coincides with the set $\{G_\delta\
|\ \delta\in\Delta^*\}.$ So, it is natural to define a graded
group $G$ from the so-called \emph{semihomogeneous aspect} as an
ordered pair $(G,H),$ where $H\subseteq G$ is the homogeneous part
with respect to some grading of $G,$ see \cite{agg}. However, in
order to do that, the following characterization of that
homogeneous part is given.
\begin{theorem}[\cite{agg}]\label{agg}
A nonempty subset $H$ of a group $G$ is the homogeneous part of
$G$ with respect to some grading of $G$ if and only if the
following conditions are satisfied:
\begin{itemize}
    \item[$i)$] $1\in H;$
    \item[$ii)$] $x\in H\Rightarrow x^{-1}\in H;$
    \item[$iii)$] $x,y,z,xy,yz\in H\wedge y\neq 1\Rightarrow xz\in H;$
    \item[$iv)$] $x,y\in H\wedge xy\notin H\Rightarrow xy=yx;$
    \item[$v)$] $H$ generates $G;$
    \item[$vi)$] If $n\geq2$ and if the elements $x_1,\dots,x_n\in H$
    are such that for all $i,j\in\{1,\dots,n\},$ $i\neq j,$ $x_ix_j\notin
    H,$ then $x_1\dots x_n\neq 1.$
\end{itemize}
\end{theorem}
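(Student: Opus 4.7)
The plan is to prove both implications. Necessity is largely a matter of unwinding the definition of a graduation, while sufficiency requires a constructive reconstruction of the grading from the abstract data $H$.

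For \emph{necessity}, I would assume $H$ is the homogeneous part of a graduation $\gamma:\Delta\to\mathrm{Sg}(G)$ with $G=\bigoplus_\delta G_\delta$, and verify each of $i)$--$vi)$. Properties $i)$ (properness gives $1\in G_0\subseteq H$), $ii)$ (each $G_\delta$ is a subgroup), and $v)$ (the union of components generates $G$) are immediate. The crucial observation for the rest is that if $x\in G_\alpha$ and $y\in G_\beta$ are both non-identity, then $xy\in H$ forces $\alpha=\beta$; otherwise the unique direct-sum expansion of $xy$ already contains two non-identity factors, so $xy$ cannot lie in any single component. From this: $iii)$ follows because $xy,yz\in H$ with $y\in H^*$ forces $\delta(x)=\delta(y)=\delta(z)$, hence $xz\in G_{\delta(x)}\subseteq H$; $iv)$ holds because elements of distinct components commute in Krasner's non-abelian direct sum; and $vi)$ is just the uniqueness of the direct-sum expression of $1$.

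For \emph{sufficiency}, I would work entirely from the data $(G,H)$ and reconstruct a graduation. On $H^*:=H\setminus\{1\}$ define $a\sim b \iff ab\in H$. To show $\sim$ is an equivalence relation: reflexivity $a^2\in H$ comes from $iii)$ applied to $(x,y,z)=(a,a^{-1},a)$ (legitimate since $a^{-1}\in H$ by $ii)$ and $a^{-1}\neq 1$); symmetry is the contrapositive of $iv)$, because $ba\notin H$ would force $ba=ab\in H$; transitivity is $iii)$ applied directly to $(x,y,z)=(a,b,c)$. For each equivalence class $\xi$ I would set $G_\xi:=\xi\cup\{1\}$ and show $G_\xi$ is a subgroup. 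Closure under inverses uses $iii)$ on $(b^{-1},b,a)$, and closure under products is a two-step application of $iii)$: first on $(b,c,c^{-1})$ to obtain $bc^{-1}\in H$ (once $bc\in H$ is known via transitivity), then on $(b,c^{-1},ca)$ to obtain $bca\in H$, i.e.\ $bc\sim a$.

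With the subgroups $G_\xi$ in hand, the final step is to verify that $\xi\mapsto G_\xi$ is a graduation whose homogeneous part is $H$. Property $v)$ gives that the $G_\xi$ generate $G$; property $iv)$ gives that any $b\in G_\xi^*$ and $c\in G_\eta^*$ with $\xi\neq\eta$ satisfy $bc\notin H$ and hence $bc=cb$; a standard reduction using these two facts expresses every $g\in G$ as a product of elements drawn from pairwise distinct $G_\xi$, and the uniqueness of such a product is exactly property $vi)$, since any non-trivial relation among pairwise non-equivalent homogeneous elements is forbidden there. By construction, $\bigcup_\xi G_\xi=H$. The main obstacle I expect is closure of $G_\xi$ under products: the two successive applications of $iii)$ require carefully chosen auxiliary elements, and the ordering in each triple matters. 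A secondary friction point is carrying out the argument in the possibly non-abelian setting --- the inter-component commutativity supplied by $iv)$ is essential both for assembling the direct-sum decomposition and for interpreting $vi)$ as the uniqueness statement that closes the proof.
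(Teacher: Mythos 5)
Your proof is correct and follows exactly the construction the paper indicates (and attributes to Krasner): the paper states this theorem without proof, but its preceding remark that the set $\{G(a):=\{x\in H\ |\ ax\in H\}\ |\ a\in H^*\}$ recovers the nontrivial components is precisely your equivalence relation $a\sim b\iff ab\in H$ on $H^*$. Your specific applications of $iii)$ for reflexivity, transitivity and subgroup closure, of $iv)$ for commutation of elements from distinct components, and of $vi)$ for uniqueness of the decomposition all check out.
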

\noindent Multiplicative operation on $G$ induces a partial
operation in $H.$ Namely, if $x,y\in H,$ then $xy$ is
def\mbox{}ined in $H$ if and only if $xy\in G$ is the element from
$H,$ and in that case the result is the same and we write it the
same way. If this situation occurs, we say that elements $x,y$ are
\emph{composable} (\emph{addable} in case of an additive
operation) and we write $x\# y.$ Clearly, $x\# y$ if and only if
$x,y$ come from the same subgroup $G(a)=\{ x\in H\ |\ ax\in H
\},$ $a\in H^*,$ see \cite{agg}.\\
In case when $H$ with the induced operation from $G$ is given, we
may reconstruct $G$ up to $H$-isomorphism. Indeed, if $a\in H^*,$
then $G(a)$ may be def\mbox{}ined as $G(a)=\{ x\in H\ |\ a\# x
\}.$ $G$ is then the direct sum of different subgroups $G(a)$ and
$H$ is obviously the homogeneous part of $G.$ The group $G$ which
is obtained in this way is called \emph{the linearization} of the
structure $(H,\cdot)$ and we denote it by $\overline{H},$ see
\cite{agg}. Hence, $\overline{H}=\bigoplus_{a\in H^*}G(a).$\\
There is a natural idea now to def\mbox{}ine the graded group from
the so called \emph{homogeneous aspect} using the corresponding
homogeneous part, at least up to isomorphism, see \cite{agg}. We
need to characterize the structure $(H,\cdot),$ which is the
homogeneous part of some graded group, with operation induced from
that group. This characterization is the subject of the following
theorem.
\begin{theorem}[\cite{agg}]\label{homog}
The structure $(H,\cdot)$ is the homogeneous part of some graded
group $G,$ with operation induced from that group, if and only if
the following conditions hold:
\begin{itemize}
    \item[$i)$] $(\exists 1\in H)(\forall x\in H)\ x\#1\wedge x1=x;$
    \item[$ii)$] $(\forall x\in H)\ x\#x;$
    \item[$iii)$] $(\forall x,y,z\in H)\ x\#y\wedge y\#z\wedge y\neq 1\Rightarrow x\#z;$
    \item[$iv)$] For all $a\in H$ for which $a\neq 1,$ $H(a)=\{ x\in H\ |\ a\#x
    \}$ is a group, called an \emph{addibility group of $H.$}
\end{itemize}
\end{theorem}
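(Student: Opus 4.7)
The plan is to prove both implications separately. For necessity, suppose $H$ is the homogeneous part of a graded group $G=\bigoplus_{\delta\in\Delta}G_\delta$ with zero grade $0$. I would then verify (i)--(iv) directly: (i) follows since $1\in G_0\subseteq H$; (ii) is immediate because $x$ lies in the subgroup $G_{\delta(x)}$ (or equals $1$); for (iii), one uses the observation already made in the text that $a\#x$ with $a\neq 1$ forces $x$ into $G_{\delta(a)}$, so $x\#y$ and $y\#z$ with $y\neq 1$ place both $x$ and $z$ in $G_{\delta(y)}$, whence $x\#z$; and (iv) holds because $H(a)$ coincides with $G_{\delta(a)}$, a subgroup of $G$.

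For sufficiency, the idea is to recover $G$ as the linearization $\overline{H}$. The first key step is to establish that $\#$, restricted to $H^*=H\setminus\{1\}$, is an equivalence relation: reflexivity is (ii); symmetry is extracted from (iv), since in the group $H(a)$ the element $x$ with $a\#x$ yields both $ax$ and $xa$ in $H$; transitivity is exactly (iii). Consequently, for $a,b\in H^*$, the sets $H(a)$ and $H(b)$ either coincide (when $a\#b$) or meet only in $\{1\}$, and $H$ partitions as $\{1\}$ together with the disjoint classes $H(a)\setminus\{1\}$.

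I would then define $G:=\bigoplus_{[a]}H(a)$ as the external direct sum of groups indexed by the distinct equivalence classes, with the summands serving as homogeneous components and $\{1\}$ assigned the zero grade. The embedding $H\hookrightarrow G$ sends $1$ to $1_G$ and each $x\in H(a)\setminus\{1\}$ to its copy in the $[a]$-summand; it remains to verify that this map is well-defined and injective, that its image equals the homogeneous part of $G$ in the resulting graduation, and that the partial operation induced on the image by the componentwise $G$-product agrees with the original operation on $H$. The main obstacle is the symmetry step above, since the symmetry of $\#$ is not among the stated axioms and must be pulled out of the group structure in (iv); once this and the partition are in hand, the construction of $G$ and the three verifications amount to routine bookkeeping for direct sums of groups, using (i) to handle the zero grade consistently.
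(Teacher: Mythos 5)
Your argument is correct: the paper itself gives no proof of this theorem (it is quoted from Krasner's theory, with the linearization $\overline{H}=\bigoplus_{a\in H^*}G(a)$ described in the surrounding text), and your construction is exactly that linearization, with the one genuinely non-routine point --- deriving the symmetry of $\#$ on $H^*$ from the group axiom $iv)$ together with reflexivity $ii)$, so that the $H(a)$ partition $H^*$ --- identified and handled correctly. Nothing further is needed.
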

\begin{definition}[\cite{agg}]\label{homogroupoid}
The structure $(H,\cdot)$ which satisfies conditions of Theorem
\ref{homog} is called a \emph{homogroupoid}.
\end{definition}
\noindent Let $H$ be a homogroupoid and $\Delta^*$ the set of
groups $H(a)=\{x\in H\ |\ ax\in H\},$ $a\in H^*=H\setminus\{1\},$
and let us denote elements of $\Delta^*$ by
$\delta,\xi,\eta,\dots.$ If $\delta\in\Delta^*,$ we denote by
$H(\delta)$ or by $G_\delta$ or by $H_\delta$ the corresponding
group $H(a)$ which defines $\delta.$ Hence, if $a\in H^*,$ then we
define its degree as $\delta(a):=\xi$ if $a\in H(\xi)=G_\xi,$ and
so, writing $H(a)$ has the same meaning as writing $H(\delta(a)),$
see \cite{cha,ha,agg}. Thus, the linearization $\overline{H}$ of
$H$ is
$\bigoplus_{\delta\in\Delta^*}H_\delta=\bigoplus_{\delta\in\Delta}H_\delta$
if we put $\Delta=\Delta^*\cup\{0\},$ where $0$ corresponds to a
trivial subgroup $\{1\};$ $\delta(1)=0.$ These situations will
occur in the sequel, and after this discussion, these should not
cause any confusion.
\begin{definition}[\cite{agg}]
A subgroup $S$ of a graded group
$G=\bigoplus_{\delta\in\Delta}G_\delta$ is called a
\emph{homogeneous subgroup} of $G$ if
$S=\bigoplus_{\delta\in\Delta}S\cap G_\delta.$
\end{definition}
\begin{definition}[\cite{agg}]
A nonempty subset $K$ of a homogroupoid $H$ is called a
\emph{subhomogroupoid} if $K$ is the homogeneous part of a
homogeneous subgroup of $\overline{H}.$
\end{definition}
\noindent A subhomogroupoid $K$ of a homogroupoid $H$ is normal if
each addibility group $K(a)$ of $K$ is a normal subgroup of
$H(a),$ where $a\in H^*.$ If $K$ is a normal subhomogroupoid of
$H,$ then the homogeneous part of $\overline{H}/\overline{K}$ is
$\bigcup_{a\in H^*}H(a)/H(a)\cap K$ \cite{agg}.
\begin{definition}[\cite{agg}]
A homomorphism $f:G\to G'$ of graded groups with homogeneous parts
$H$ and $H',$ respectively, is called \emph{quasihomogeneous} if
$f(H)\subseteq H'.$ A quasihomogeneous homomorphism $f$ is called
a \emph{homogeneous homomorphism} if for $x, y\in H$ with
$f(x)\neq1',$ $f(y)\neq1',$ we have
\[\delta(f(x))=\delta(f(y))\Rightarrow\delta(x)=\delta(y).\]
\end{definition}
\noindent The homogeneous counterpart of the above definition
follows.
\begin{definition}[\cite{agg}]
Let $H$ and $H'$ be homogroupoids. The mapping $f:H\to H'$ is
called \emph{quasihomomorphism} if $x\#y$ implies $f(x)\#f(y)$ and
$f(xy)=f(x)f(y),$ $x,y\in H.$ A quasihomomorphism is called
\emph{homomorphism} if the composability of nonidentity images
implies the composability of the originals.
\end{definition}
\noindent As we have seen, homogroupoids are homogeneous parts of
graded groups with induced partial operation. Analogously, an
\emph{anneid} and a \emph{moduloid} represent homogeneous parts of
a graded ring and a graded module, respectively, with induced
operations \cite{cha,ha,agg}. A ring $R$ is called \emph{graded}
\cite{cha,ha,agg,ak2,ak1} if $(R,+)$ is a commutative graded group
in the above sense with grading
$\gamma_R:\Delta\to\mathrm{Sg}(R,+),$ $\gamma_R(\delta)=R_\delta,$
and if for all $\xi,\eta\in\Delta,$ $R_\xi R_\eta\subseteq
R_\zeta,$ for some $\zeta\in\Delta,$ while a right $R$-module $M$
is \emph{graded} \cite{cha,ha,agg} if
$R=\bigoplus_{\delta\in\Delta}R_\delta$ is a graded ring and $M$ a
commutative graded group with grading
$\gamma_M:D\to\mathrm{Sg}(M,+),$ $\gamma_M(d)=M_d,$ and if for all
$\delta\in\Delta$ and $d\in D$ there exists $t\in D$ such that
$M_dR_\delta\subseteq M_t.$ As an example of a graded ring one can
take, for instance, every group graded ring in the usual sense.
However, there are important types of rings which are graded in
Krasner's sense, but not necessarily group graded,
such as the already mentioned semidirect sums of rings (see also \cite{ak}).\\
If $M$ and $M'$ are $A$-moduloids, than the mapping $f:M\to M'$ is
called a \emph{quasihomomorphism} \cite{cha,ha,agg} if for all
$x,y\in M, a\in A,$ $f(xa)=f(x)a$ and if $x\#y$ implies
$f(x)\#f(y)$ and $f(x+y)=f(x)+f(y).$ A quasihomomorphism is called
a \emph{homomorphism} \cite{cha,ha,agg} if moreover we have that
the addibility of nonzero images implies the addibility of
originals. A right $A$-moduloid $M$ is called \emph{right regular}
\cite{cha,ha,agg} if the mapping $a\to xa$ $(x\in M)$ from $A$ to
$M$ is a homomorphism. An $A$-moduloid $M$ is called \emph{small}
\cite{ha} if it is regular and if there exists $w$ such that
$M=M(w)A.$
\section{Graded $\Gamma$-rings}
\noindent We study three equivalent aspects of graded
$\Gamma$-rings, nonhomogeneous, semihomogeneous and homogeneous,
by analogy with \cite{agg} for graded groups, rings and modules.
\subsection{Nonhomogeneous aspect}
\noindent Let $R$ be a $\Gamma$-ring in the sense of Nobusawa and
let $R$ and $\Gamma$ be graded groups in the sense of Krasner with
gradings $\gamma_R:\Delta\to\mathrm{Sg}(R,+),$
$\gamma_R(\delta)=R_\delta,$
$R=\bigoplus_{\delta\in\Delta}R_\delta,$ and
$\gamma_\Gamma:D\to\mathrm{Sg}(\Gamma,+),$
$\gamma_\Gamma(d)=\Gamma_d,$ $\Gamma=\bigoplus_{d\in D}\Gamma_d,$
respectively. Also, let $A$ be the homogeneous part of $R$ and $G$
the homogeneous part of $\Gamma.$ If the following conditions are
satisfied:
\begin{equation}\label{1}
(\forall\xi,\eta\in\Delta)(\forall d\in D)(\exists\zeta\in\Delta)\
R_\xi\Gamma_dR_\eta\subseteq R_\zeta,
\end{equation}
\begin{equation}\label{2}
(\forall s,t\in D)(\forall\delta\in\Delta)(\exists d\in D)\
\Gamma_sR_\delta\Gamma_t\subseteq\Gamma_d,
\end{equation}
then $R$ is called a \emph{graded $\Gamma$-ring of Nobusawa of
type $(\Delta,D),$} where $\Delta$ and $D$ are sets. $R$ is called
a \emph{graded $\Gamma$-ring of type $(\Delta,D)$} if $R$ is
observed as a $\Gamma$-ring and if only \eqref{1} is satisfied.\\
The following lemma represents an analogous result obtained for
graded rings, see, for example, \cite{agg}.
\begin{lemma}\label{lemma1}
The condition \eqref{1} is satisfied if and only if the following
conditions hold:
\begin{itemize}
    \item[$i)$] $AGA\subseteq A;$
    \item[$ii)$] If $x,y\in A$ and $\alpha\in G$ with $x\alpha
    y\neq0,$ then $\delta(x\alpha y)$ depends only on $\delta(x),$
    $\delta(y)$ and $d(\alpha),$
\end{itemize}
where $\delta(x)\in\Delta$ and $d(\alpha)\in D$ denote degrees of
$x\in A$ and $\alpha\in G,$ respectively.
\end{lemma}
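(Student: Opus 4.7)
The plan is to prove both implications by a direct unpacking: in a proper Krasner graduation, the nonzero elements of a homogeneous component $R_\xi$ are exactly the homogeneous elements of grade $\xi$ (and similarly for $\Gamma_d$), so (1) and (i)--(ii) are just two ways of saying the same closure property.

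For the forward direction, I would assume \eqref{1} and pick arbitrary homogeneous $x,y\in A$, $\alpha\in G$ with $\delta(x)=\xi$, $d(\alpha)=d$, $\delta(y)=\eta$. Then $x\in R_\xi$, $\alpha\in\Gamma_d$, $y\in R_\eta$, hence $x\alpha y\in R_\zeta\subseteq A$, which is (i). If in addition $x\alpha y\neq 0$, then $x\alpha y$ is a nonzero element of $R_\zeta$ and therefore $\delta(x\alpha y)=\zeta$; since $\zeta$ in \eqref{1} is determined by $(\xi,d,\eta)$ alone, this is exactly (ii).

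For the converse, I would fix $\xi,\eta\in\Delta$ and $d\in D$ and split into two cases. If every product $x\alpha y$ with $x\in R_\xi$, $\alpha\in\Gamma_d$, $y\in R_\eta$ is zero, then $R_\xi\Gamma_dR_\eta=\{0\}\subseteq R_\zeta$ for any $\zeta$ (e.g.\ the zero grade). Otherwise choose one triple $(x_0,\alpha_0,y_0)$ of nonzero homogeneous elements of the prescribed grades with $x_0\alpha_0y_0\neq 0$, and set $\zeta:=\delta(x_0\alpha_0y_0)$. By (ii) this value depends only on $(\xi,d,\eta)$. Then for any other triple $(x,\alpha,y)$ with $x\in R_\xi\setminus\{0\}$, $\alpha\in\Gamma_d\setminus\{0\}$, $y\in R_\eta\setminus\{0\}$, condition (i) forces $x\alpha y\in A\cup\{0\}$, and when $x\alpha y\neq 0$, (ii) forces $\delta(x\alpha y)=\zeta$, i.e.\ $x\alpha y\in R_\zeta$; zero products lie in $R_\zeta$ trivially. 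Since $R_\zeta$ is an additive subgroup, closing under sums yields $R_\xi\Gamma_dR_\eta\subseteq R_\zeta$, which is \eqref{1}.

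The only real subtlety, and the one place care is needed, is the handling of zero products: the grade function $\delta$ is undefined (or ``zero grade'') at $0$, so the hypothesis (ii) is a conditional on $x\alpha y\neq 0$. This is what lets the backward direction pick $\zeta$ arbitrarily when the triple product vanishes identically on $R_\xi\times\Gamma_d\times R_\eta$, matching precisely the existential quantifier in \eqref{1}. Beyond this bookkeeping, the argument is a direct translation between the two formulations and requires no further machinery.
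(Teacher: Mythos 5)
Your proof is correct, and it diverges from the paper's in one instructive place. The converse direction ((i) and (ii) imply \eqref{1}) is handled the same way in both: the paper simply declares it clear, and you supply the routine case split on whether $R_\xi\Gamma_dR_\eta$ vanishes, correctly noting that the zero case is where the existential $\zeta$ is chosen arbitrarily and that the additive span of the products stays in $R_\zeta$ because $R_\zeta$ is a subgroup. The difference is in the forward direction for (ii). You derive (ii) straight from \eqref{1}: the $\zeta$ furnished by \eqref{1} for the grade triple $(\xi,d,\eta)$ absorbs every product $x\alpha y$ of those grades, so any nonzero such product has grade $\zeta$; this is short and airtight (it does not even need the uniqueness of $\zeta$, only that one fixed choice works for all triples of the given grades). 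The paper instead proves the stronger assertion that (i) \emph{alone} implies (ii), by an additivity argument: if $\delta(x)=\delta(z)$ then $x+z\in A$, so $(x+z)\alpha y=x\alpha y+z\alpha y\in A$ by (i), forcing $\delta(x\alpha y)=\delta(z\alpha y)$ when both are nonzero, and then chaining through the three arguments. What the paper's route buys is the extra information that condition (ii) is redundant given (i); what your route buys is a cleaner and more robust argument for the equivalence actually being claimed (the paper's chaining step tacitly assumes the intermediate cross-products $z\alpha y$, $x\alpha u$ are nonzero, a case distinction your argument never has to face). Both establish the lemma.
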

\begin{proof}
Clearly, \eqref{1} follows from $i)$ and $ii).$ Conversely,
suppose that \eqref{1} holds and let us prove that $AGA\subseteq
A.$ Let $x,y\in A$ and $\alpha\in G.$ Then there exist
$\xi,\eta\in\Delta$ and $d\in D$ such that $x\in R_\xi,$ $y\in
R_\eta$ and $\alpha\in\Gamma_d$ which implies $x\alpha y\in
R_\xi\Gamma_dR_\eta,$ and by \eqref{1} there exists
$\zeta\in\Delta$ such that $R_\xi\Gamma_dR_\eta\subseteq
R_\zeta\subseteq A,$ and so $AGA\subseteq A$ and $i)$ holds.
Actually, $i)$ implies $ii).$ Indeed, assume that $i)$ holds and
let $x,y,z,u\in A,$ $\alpha\in G,$ $\delta(x)=\delta(z),$
$\delta(y)=\delta(u),$ $x\alpha y\neq0,$ $z\alpha y\neq0,$
$x\alpha u\neq0,$ $z\alpha u\neq0.$ Then $x+z\in A,$ $y+u\in A,$
and hence, $(x+z)\alpha y\in A,$ that is, $x\alpha y+z\alpha y\in
A,$ that is $\delta(x\alpha y)=\delta(z\alpha y).$ Similarly,
$\delta(x\alpha y)=\delta(x\alpha u).$ If $x,x',y,y'\in A,$
$\alpha\in G,$ $\delta(x)=\delta(x'),$ $\delta(y)=\delta(y'),$
$x\alpha y\neq0,$ $x'\alpha y'\neq0,$ then $\delta(x\alpha
y)=\delta(x'\alpha y').$ Indeed, if $x\alpha y'\neq0,$ then,
according to what we have already proved, $\delta(x\alpha
y)=\delta(x\alpha y')$ and $\delta(x\alpha y')=\delta(x'\alpha
y').$ Analogously, $\delta(x\alpha y)=\delta(x'\alpha y')$ if
$x'\alpha y\neq0.$ If however $x\alpha y'=0$ and $x'\alpha y=0,$
then $x\alpha y+x'\alpha y'=(x+x')\alpha(y+y')\in A,$ and so
$\delta(x\alpha y)=\delta(x'\alpha y').$ If $x,x',y,y'\in A,$
$\alpha,\beta\in G,$ $\delta(x)=\delta(x'),$
$\delta(y)=\delta(y'),$ $x\alpha y\neq0,$ $x'\alpha y'\neq0,$
$x\beta y\neq0,$ $x'\beta y'\neq0,$ and $d(\alpha)=d(\beta),$ then
$x\alpha y+x\beta y=x(\alpha+\beta)y\in A,$ and so $\delta(x\alpha
y)=\delta(x\beta y).$ Since, by what we have already proved,
$\delta(x\beta y)=\delta(x'\beta y')$ and $\delta(x\alpha
y)=\delta(x'\alpha y'),$ we have $\delta(x\alpha y)=\delta(x\beta
y)=\delta(x'\alpha y')=\delta(x'\beta y'),$ and so $ii)$ holds.
\end{proof}
\noindent The following lemma can be proved similarly.
\begin{lemma}\label{lemma2}
The condition \eqref{2} is satisfied if and only if the following
conditions hold:
\begin{itemize}
    \item[$i)$] $GAG\subseteq G;$
    \item[$ii)$] If $\alpha, \beta\in G$ and $x\in A$ with $\alpha
    x\beta\neq0,$ then $d(\alpha x\beta)$ depends only on
    $d(\alpha),$ $d(\beta)$ and $\delta(x),$
\end{itemize}
where $\delta(x)\in\Delta$ and $d(\alpha)\in D$ denote degrees of
$x\in A$ and $\alpha\in G,$ respectively.
\end{lemma}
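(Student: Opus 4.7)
The plan is to mirror the proof of Lemma \ref{lemma1}, interchanging the syntactic roles of $A$ and $G$: here the outer factors come from $\Gamma$ and the inner factor from $R$, but the graduation-theoretic bookkeeping is identical. The easy direction comes first: assuming $(i)$ and $(ii)$, fix $s, t \in D$ and $\delta \in \Delta$; every product $\alpha x \beta$ with $\alpha \in \Gamma_s$, $x \in R_\delta$, $\beta \in \Gamma_t$ lies in $G$ by $(i)$, and by $(ii)$ all nonzero such products share one grade $d$, so $\Gamma_s R_\delta \Gamma_t \subseteq \Gamma_d$ (choosing $d$ arbitrarily if all products vanish). This is \eqref{2}.

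For the converse, suppose \eqref{2} holds. Condition $(i)$ is immediate: any $\alpha, \beta \in G$ and $x \in A$ sit in some $\Gamma_s$, $R_\delta$, $\Gamma_t$, whence $\alpha x \beta \in \Gamma_s R_\delta \Gamma_t \subseteq \Gamma_d \subseteq G$. It remains to derive $(ii)$ from $(i)$ alone, which I would do by the same additive trick used in Lemma \ref{lemma1}. Given $\alpha, \alpha' \in \Gamma_s$ with $\alpha x \beta \neq 0$ and $\alpha' x \beta \neq 0$, the sum $\alpha + \alpha'$ still belongs to the homogeneous component $\Gamma_s \subseteq G$, so $(i)$ gives
\[
(\alpha + \alpha') x \beta = \alpha x \beta + \alpha' x \beta \in G.
\]
Because $\Gamma = \bigoplus_{d \in D} \Gamma_d$ is a direct (Krasner) sum, a nonzero element of $G$ written as a sum of two nonzero elements of $G$ forces those two summands to be addible, i.e., to lie in a common $\Gamma_d$; hence $d(\alpha x \beta) = d(\alpha' x \beta)$. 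The same reasoning applied to $\beta, \beta' \in \Gamma_t$ and to $x, x' \in R_\delta$ yields $d(\alpha x \beta) = d(\alpha x \beta') = d(\alpha x' \beta)$ whenever the products in question are nonzero. Chaining these three equalities shows that $d(\alpha x \beta)$ depends only on $s, t, \delta$, which is $(ii)$.

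The main obstacle — really the only nontrivial point — is the directness argument: one has to recognize that in a proper Krasner graduation, if $u, v \in G \setminus \{0\}$ and $u + v \in G$, then $u$ and $v$ necessarily lie in the same homogeneous component. Once this is isolated, the rest of the argument is a routine transcription of Lemma \ref{lemma1}, with the linear combinations now being formed inside $\Gamma_s$ or $\Gamma_t$ rather than inside $R_\xi$ or $R_\eta$.
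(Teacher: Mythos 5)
Your proposal is correct and follows essentially the same route as the paper, which simply remarks that Lemma \ref{lemma2} "can be proved similarly" to Lemma \ref{lemma1}: you establish the easy direction, deduce $(i)$ directly from \eqref{2}, and then derive $(ii)$ from $(i)$ alone via the same additive trick (forming sums inside a single homogeneous component and invoking the directness of the Krasner graduation), exactly as in the paper's proof of Lemma \ref{lemma1}. The one point worth noting is that your chaining $d(\alpha x\beta)=d(\alpha' x\beta)=d(\alpha' x'\beta)=d(\alpha' x'\beta')$ tacitly assumes the intermediate products are nonzero, but the paper's own argument for Lemma \ref{lemma1} makes the identical tacit assumption, so your write-up matches the paper's proof in both structure and level of rigor.
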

\noindent These lemmas allow us to define ternary operations
$\Delta\times D\times\Delta\to\Delta$ and $D\times\Delta\times
D\to D.$ Let $R$ be a graded $\Gamma$-ring of Nobusawa of type
$(\Delta,D).$ Let us prove that $\zeta$ from condition \eqref{1}
is unique if $R_\xi\Gamma_dR_\eta\neq\{0\}.$ Suppose that there is
another $\zeta'\in\Delta$ such that $R_\xi\Gamma_dR_\eta\subseteq
R_{\zeta'}.$ Then $R_\xi\Gamma_dR_\eta\subseteq R_\zeta\cap
R_{\zeta'}=\{0\}$ which implies $R_\xi\Gamma_dR_\eta=\{0\},$
contrary to assumption. Hence $\zeta$ from \eqref{1} is unique. A
similar argument shows that $d\in D$ from \eqref{2} is unique if
$\Gamma_sR_\delta\Gamma_t\neq\{0\}.$ Thus for $\xi, \eta\in\Delta$
and $d\in D$ such that $R_\xi\Gamma_dR_\eta\neq\{0\}$ we may
define $\xi d\eta\in\Delta$ to be that unique $\zeta$ for which
$R_\xi\Gamma_dR_\eta\subseteq R_\zeta.$ In that case we say that
$\xi, d$ and $\eta$ are \emph{composable}. However, since we
assume that the gradings in hand are proper, we also define $\xi
d\eta=0$ if $R_\xi\Gamma_dR_\eta=\{0\}.$ We similarly define the
composability of $s, \delta$ and $t.$
\begin{example}\label{example}
1. Let $R$ be a graded ring in Krasner's sense \cite{cha,ha,agg},
i.e., $R$ is the direct sum of its additive subgroups $R_\delta,$
where $\delta$ runs through a nonempty set $\Delta,$ and for all
$\xi,\eta\in\Delta$ there exists $\zeta\in\Delta$ such that $R_\xi
R_\eta\subseteq R_\zeta$ (see also \cite{ak2}). Then $R$ is a
graded $\Gamma$-ring if we put $\Gamma=R.$ Indeed, let $\xi,$
$\eta,$ $\zeta\in\Delta.$ Then, if $R_\xi R_\eta
R_\zeta\neq\{0\},$ we have \[R_\xi R_\eta R_\zeta=(R_\xi
R_\eta)R_\zeta\subseteq R_\lambda R_\zeta\subseteq R_\delta,\] for
some $\lambda,$ $\delta\in\Delta$ according to the definition of a
graded ring. If, however, $R_\xi R_\eta R_\zeta=\{0\},$ then
$R_\xi R_\eta R_\zeta\subseteq R_\delta,$ for
every $\delta\in\Delta.$\\
2. Let $D$ be a division ring and $M=M_{m,n}(D)$ the group of
rectangular matrices of type $m\times n$ over $D.$ Then
$M=\bigoplus_{(i,j)\in I_m\times I_n}M_{i,j},$ where $M_{i,j}$
denotes the set of matrices whose $(i,j)$-entry is from $D$ and
all other entries are zero, $I_m=\{1,\dots,m\},$
$I_n=\{1,\dots,n\}.$ Recall from Section \ref{intro}, $M$ is a
graded $\Gamma=M$-ring, if for $x,y,\alpha\in M$ we define
$x\alpha y=x\alpha^Ty,$ where $\alpha^T$ denotes the transpose of
$\alpha.$ Notice that it is also a graded $\Gamma$-ring in the
sense of Nobusawa. If $D$ is a graded division ring in Krasner's
sense \cite{agg} with the grading set $\Delta,$ i.e., it is a
graded ring in Krasner's sense and every nonzero homogeneous
element is invertible, then, for each pair $(i,j)\in I_m\times
I_n,$ $M_{i,j}=\bigoplus_{\delta\in\Delta}(M_{i,j})_\delta,$ where
$(M_{i,j})_\delta$ is the set of matrices from $M_{i,j}$ whose
$(i,j)$-entry comes from $D_\delta,$ which is obviously a subgroup
of $M_{i,j}.$ Then again $M$ represents a graded $\Gamma=M$-ring
in the sense of Nobusawa since for all $(i,j),(k,l),(p,q)\in
I_m\times I_n$ and $\delta,\xi,\eta\in\Delta$ there exist
$(r,s)\in I_m\times I_n$ and $\zeta\in\Delta$ such that
$(M_{i,j})_\delta(M_{k,l})_\xi^T(M_{p,q})_\eta\subseteq(M_{r,s})_\zeta.$
\end{example}
\subsection{Semihomogeneous aspect}
\noindent Let $R$ be a graded $\Gamma$-ring of Nobusawa of type
$(\Delta,D).$ Then $A$ and $G$ satisfy known conditions for
homogeneous parts of graded groups (see Theorem \ref{agg}).
According to lemmas \ref{lemma1} and \ref{lemma2}, there are two
more conditions, namely, $AGA\subseteq A$ and $GAG\subseteq G.$
Conversely, if all of these conditions for $A\subseteq R$ and
$G\subseteq\Gamma$ are satisfied, then we know that $A$ and $G$
are homogeneous parts of graded groups $R$ and $\Gamma$ of type
$\Delta$ and $D,$ respectively, with gradings
$\gamma_R:A^*\to\mathrm{Sg}(R,+),$ $\gamma_R(a)=A(a)=\{x\in A\ |\
a+x\in A\},$ $\gamma_\Gamma:G^*\to\mathrm{Sg}(\Gamma,+),$
$\gamma_\Gamma(\alpha)=G(\alpha)=\{\xi\in G\ |\ \alpha+\xi\in
G\},$ and by lemmas \ref{lemma1} and \ref{lemma2}, $R$ is a graded
$\Gamma$-ring of Nobusawa of type $(\Delta,D).$ Hence, we may
define a graded $\Gamma$-ring of Nobusawa from the semihomogeneous
point of view as a quadruple $(R,\Gamma,A,G),$ where $R$ is a
$\Gamma$-ring of Nobusawa, and $A\subseteq R,$ $G\subseteq\Gamma$
nonempty subsets such that:
\begin{itemize}
    \item[$i)$] $0\in A;$
    \item[$ii)$] $x\in A\Rightarrow -x\in A;$
    \item[$iii)$] $x,y,z,x+y,y+z\in A\wedge y\neq0\Rightarrow x+z\in A;$
    \item[$iv)$] $A$ generates $R;$
    \item[$v)$] If $n\geq2$ and if elements $x_1,\dots,x_n\in A$ are
    such that for all $i,j\in\{1,\dots,n\},$ $i\neq j,$ $x_i+x_j\notin
    A,$ then $x_1+\dots+x_n\neq0;$
    \item[$vi)$] $AGA\subseteq A;$
    \item[$i')$] $0\in G;$
    \item[$ii')$] $\alpha\in G\Rightarrow -\alpha\in G;$
    \item[$iii')$] $\alpha,\beta,\gamma,\alpha+\beta,\beta+\gamma\in G\wedge \beta\neq0\Rightarrow \alpha+\gamma\in G;$
    \item[$iv')$] $G$ generates $\Gamma;$
    \item[$v')$] If $n\geq2$ and if elements $\alpha_1,\dots,\alpha_n\in G$ are
    such that for all $i,j\in\{1,\dots,n\},$ $i\neq j,$ $\alpha_i+\alpha_j\notin
    G,$ then $\alpha_1+\dots+\alpha_n\neq0;$
    \item[$vi')$] $GAG\subseteq G.$
\end{itemize}
\begin{remark}
Here, we denoted neutral elements of $R$ and $\Gamma$ by the same
symbol, $0,$ but there is no fear of confusion.
\end{remark}
\subsection{Homogeneous aspect}
\noindent Let $R$ be a graded $\Gamma$-ring of Nobusawa, $A$ the
homogeneous part of $R,$ $G$ the homogeneous part of $\Gamma.$
Observe the restriction of addition of $R$ on $A,$ and similarly
of addition of $\Gamma$ on $G.$ Also, let us restrict operations
$R\times\Gamma\times R\to R$ and $\Gamma\times
R\times\Gamma\to\Gamma$ on $A\times G\times A\to A$ and $G\times
A\times G\to G,$ respectively. Then restricted additions on $A$
and $G$ are partial operations since sum of two nonzero
homogeneous elements does not have to be a homogeneous element. On
the other hand, according to lemmas \ref{lemma1} and \ref{lemma2},
ternary operations $A\times G\times A\to A$ and $G\times A\times
G\to G$ are everywhere defined. We then call $A$ a
$G$-\emph{anneid of Nobusawa} or a \emph{gamma anneid of
Nobusawa}. If $R$ were only a $\Gamma$-ring, we would obtain a
structure called a $G$-\emph{anneid} or a \emph{gamma anneid}.
Notice that if $A$ is a $G$-anneid, then $A$ and $G$ are
commutative homogroupoids. The set of addibility groups of $A$
will be denoted by $\Delta^*,$ while the set of addibility groups
of $G$ will be denoted by $D^*.$ The corresponding linearizations
$\overline{A}$ and $\overline{G}$ of $A$ and $G,$ respectively,
are hence $\bigoplus_{\delta\in\Delta^*}A_\delta=\bigoplus_{a\in
A^*}A(a)$ and $\bigoplus_{d\in D^*}G_d=\bigoplus_{\alpha\in
G^*}G(\alpha),$ respectively. Naturally, if we put
$\Delta=\Delta^*\cup\{0\}$ and $D=D^*\cup\{0\},$
$\overline{A}=\bigoplus_{\delta\in\Delta}A_\delta$ and
$\overline{G}=\bigoplus_{d\in D}G_d.$
\begin{remark}
Gamma anneid could also be called a \emph{gamma ringoid} but we
kept the term which resembles the original notion of the
homogeneous part of a graded ring in Krasner's sense, the
\emph{anneid}, see \cite{cha,ha,agg}.
\end{remark}
\noindent The following theorem gives us the characterization of
gamma anneids.
\begin{theorem}
Let $A$ and $G$ be commutative homogroupoids. Then $A$ is a
$G$-anneid of Nobusawa if and only if:
\begin{itemize}
    \item[$i)$] $(\forall x,y\in A)(\forall\alpha\in G)\ x\alpha y\in A;$
    \item[$ii)$] $(\forall\alpha,\beta\in G)(\forall x\in A)\ \alpha x\beta\in G;$
    \item[$iii)$] $(\forall a,a',b,b'\in A)(\forall\gamma,\gamma'\in G)\ a\#a'\Rightarrow a\gamma b\#a'\gamma b\wedge(a+a')\gamma b=a\gamma b+a'\gamma
    b,$ $\gamma\#\gamma'\Rightarrow a\gamma b\#a\gamma'b\wedge a(\gamma+\gamma')b=a\gamma
    b+a\gamma'b,$ $b\#b'\Rightarrow a\gamma b\#a\gamma b'\wedge a\gamma(b+b')=a\gamma b+a\gamma b';$
    \item[$iv)$] $(\forall a,b,c\in A)(\forall\gamma,\beta\in G)\ (a\gamma b)\beta c=a\gamma(b\beta c)=a(\gamma b\beta)c\wedge a\gamma b=0\ (a,b\in A)\Rightarrow\gamma=0.$
\end{itemize}
\end{theorem}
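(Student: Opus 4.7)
The proof splits into the routine direction (a $G$-anneid of Nobusawa satisfies i)--iv)) and the substantive direction (conditions i)--iv) on commutative homogroupoids $A$ and $G$ produce a $G$-anneid of Nobusawa). For the forward direction, I would simply unpack the definition: since $A$ and $G$ are the homogeneous parts of a graded $\Gamma$-ring of Nobusawa $R$, the ternary operations $A\times G\times A\to A$ and $G\times A\times G\to G$ are totally defined (this is exactly Lemmas \ref{lemma1} and \ref{lemma2}), giving i) and ii). The distributivity clauses in iii) then follow by restricting the $\Gamma$-ring distributive laws to addible homogeneous triples, where the addibility assertions $a\gamma b\#a'\gamma b$ etc.\ hold because both summands lie in the same homogeneous component $R_{\delta(a)d(\gamma)\delta(b)}$. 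Part iv) is just the restriction of the associativity and Nobusawa axioms to homogeneous elements.

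The backward direction is the real work. The plan is to build $R:=\overline{A}$ and $\Gamma:=\overline{G}$ as the linearizations from Section~2, and to extend the ternary operations to $R\times\Gamma\times R\to R$ and $\Gamma\times R\times\Gamma\to\Gamma$ by trilinearity. Concretely, for $x=\sum_i x_i\in\overline{A}$, $\alpha=\sum_j\alpha_j\in\overline{G}$, $y=\sum_k y_k\in\overline{A}$ with all $x_i,y_k\in A$, $\alpha_j\in G$, set
\[
x\alpha y:=\sum_{i,j,k} x_i\alpha_j y_k,
\]
and analogously for $\Gamma\times R\times\Gamma\to\Gamma$. Well-definedness relies on the uniqueness of the decomposition into homogeneous components in $\overline{A}$ and $\overline{G}$ combined with distributivity iii): whenever two homogeneous summands in $A$ are addible, their images after multiplication are addible (iii) guarantees this) so the sum on the right reorganises consistently. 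The associativity relations in iv), being multilinear identities, then propagate from homogeneous triples to arbitrary elements; similarly for the Nobusawa axiom $a\gamma b=0\Rightarrow\gamma=0$, which extends to $\Gamma$ because if $x\gamma y=0$ in $\overline{A}$ for all $x,y\in\overline{A}$ then in particular $a\gamma_j b=0$ for every homogeneous component $\gamma_j$ of $\gamma$ and all $a,b\in A$, forcing each $\gamma_j=0$ hence $\gamma=0$.

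It remains to verify that the resulting structure is graded in the sense of \eqref{1} and \eqref{2}. This is where I invoke the converse direction of Lemmas \ref{lemma1} and \ref{lemma2}: conditions i) and ii) of the present theorem give $AGA\subseteq A$ and $GAG\subseteq G$, and, exactly as in the proof of Lemma~\ref{lemma1}, part i) of that lemma actually implies part ii) (so $\delta(x\alpha y)$ depends only on $\delta(x),d(\alpha),\delta(y)$). Hence \eqref{1} and \eqref{2} hold for $R=\overline{A}$ and $\Gamma=\overline{G}$, making $R$ a graded $\Gamma$-ring of Nobusawa of type $(\Delta,D)$ whose homogeneous parts are precisely the given $A$ and $G$.

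The main obstacle is bureaucratic rather than conceptual: showing that the extension by linearity is well-defined and that the laws in iii)--iv), initially stated only under explicit addibility hypotheses, really do pass to arbitrary sums. This amounts to a careful regrouping argument in which every use of distributivity must be licensed by an addibility check of the form provided by iii); once that pattern is established, checking the $\Gamma$-ring of Nobusawa axioms on $\overline{A}$ and $\overline{G}$ reduces mechanically to the homogeneous case.
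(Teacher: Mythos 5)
Your proposal is correct and follows essentially the same route as the paper: the converse is proved by forming the linearizations $R=\overline{A}$, $\Gamma=\overline{G}$ and extending the ternary products trilinearly via $\bar{x}\bar{\alpha}\bar{y}=\sum_{\delta,d,\delta'}x_\delta\alpha_d y_{\delta'}$, after which the $\Gamma$-ring-of-Nobusawa axioms and the grading conditions reduce to the homogeneous case through Lemmas \ref{lemma1} and \ref{lemma2}. The additional care you take over well-definedness and the addibility bookkeeping only fills in steps the paper dismisses as analogous.
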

\begin{proof}
It is clear that the given conditions are necessary. Now, suppose
that $i)-iv)$ hold. Let $R=\overline{A}=\bigoplus_{a\in A^*}A(a)$
and $\Gamma=\overline{G}=\bigoplus_{\alpha\in G^*}G(\alpha)$ and
let $\bar{x}_i,\bar{y}\in R,$ $i=1,2,$ $\bar{\alpha}\in\Gamma.$
Then $\bar{x}_i,$ $\bar{y}$ and $\bar{\alpha}$ are of the form
$\sum x_\delta^i,$ $\sum y_\delta$ and $\sum\alpha_d,$
respectively, where $x_\delta^i,y_\delta\in A_\delta=\{x\in A\ |\
x=0\vee(x\neq0\wedge\delta(x)=\delta)\},$ $\alpha_d\in
G_d=\{\alpha\in G\ |\ \alpha=0\vee(\alpha\neq0\wedge
d(\alpha)=d)\},$ $x_\delta^i=y_\delta=0$ and $\alpha_d=0$ for all
but finitely many $\delta$ and $d,$ respectively. Now put
$\bar{x}_i\bar{\alpha}\bar{y}:=\sum_\delta\sum_d\sum_{\delta'}x^i_\delta\alpha_dy_{\delta'}.$
Then
\begin{eqnarray}\nonumber
(\bar{x}_1+\bar{x}_2)\bar{\alpha}\bar{y}&=&\sum_\delta(x^1_\delta+x^2_\delta)\sum_d\alpha_d\sum_{\delta'}y_{\delta'}\\
\nonumber&=&\sum_\delta\sum_d\sum_{\delta'}x^1_\delta\alpha_dy_{\delta'}+\sum_\delta\sum_d\sum_{\delta'}x^2_\delta\alpha_dy_{\delta'}\\
\nonumber&=&\bar{x}_1\bar{\alpha}\bar{y}+\bar{x}_2\bar{\alpha}\bar{y}.
\end{eqnarray}
We analogously prove the other properties which make $R$ a graded
$\Gamma$-ring of Nobusawa.
\end{proof}
\begin{remark}
It is clear which of stated conditions are necessary and
sufficient in order for $A$ to be a $G$-anneid.
\end{remark}
\noindent Hence, given a $G$-anneid $A,$ $R=\overline{A},$ and
$\Gamma=\overline{G},$ $R$ is a graded $\Gamma$-ring of type
$(\Delta,D),$ where, $\Delta=\Delta^*\cup\{0\}$ is the grading set
of $R,$ and $D=D^*\cup\{0\}$ the grading set of $\Gamma.$ Let us
recall once again, elements of $\Delta^*$ are denoted by
$\delta,\xi,\eta,\dots,$ but is actually comprised of addibility
groups $A(a)=\{x\in A\ |\ a+x\in A\},$ $a\neq0.$ The \emph{degree}
$\delta(a)$ of $0\neq a\in A$ is defined to be the element
$\delta\in\Delta^*$ which denotes $A(a).$ Conversely, given
$\xi\in\Delta^*,$ the corresponding addibility group will be
denoted by $A(a)$ if $\delta(a)=\xi,$ or it will be denoted by
$A(\xi)$ or by $A_\xi.$ So, when there is no danger of confusion,
the degree of an element and the element will be denoted the same
way. We also keep this convention for the grading set of $\Gamma.$
\begin{example}
Let us take a look at the set $M=M_{m,n}(D)$ of rectangular
matrices of type $m\times n$ over a division ring $D$ once again.
As we saw, it can be regarded as a graded $\Gamma=M$-ring in the
sense of Nobusawa (see Example \ref{example}). Let
$A=\bigcup_{(i,j)\in I_m\times I_n}M_{i,j}.$ Then it is easy to
see that $A$ is a $G=A$-anneid in the sense of Nobusawa if for
$a,b,\alpha\in A$ we define $a\alpha b=a\alpha^Tb.$
\end{example}
\begin{example}
Let $M$ and $M'$ be small moduloids over an anneid, and let
$A=\hom(M,M')$ and $G=\hom(M',M)$ be sets of homomorphisms from
$M$ to $M'$ and from $M'$ to $M,$ respectively. Without the
assumption that $M$ and $M'$ are small, we would not know whether
$A$ and $G$ are homogroupoids. However, since $M$ and $M'$ are
small, both $A$ and $G$ are homogroupoids according to \cite{ha}.
Take $f,g\in A$ and $h\in G.$ Then, for all $x\in M,$ we have
$g(x)\in M',$ $h(g(x))\in M$ and finally, $f(h(g(x)))\in M'.$
Thus, we have $fhg\in A.$ It is easy to verify that $A$ is a
$G$-anneid.
\end{example}
\noindent We have seen three aspects of graded gamma rings in the
sense of Nobusawa and of graded gamma rings which are mutually
equivalent. In the rest of the article we chose to work in the
frame of the homogeneous aspect. It is justified, since in proving
some results concerning homogeneous objects, we need not to think
whether some nonhomogeneous elements are really necessary.
\section{Graded $\Gamma$-modules}
\noindent Let $R$ be a graded $\Gamma$-ring of type $(\Delta,D),$
$V$ a graded additive abelian group of type $\Delta_V$ and let $V$
be a right $R\Gamma$-module (as defined in \cite{rs}). Also, $M$
will be denoting the homogeneous part of $V,$ $A$ the homogeneous
part of $R$ and $G$ the homogeneous part of $\Gamma.$
\begin{definition}
$V$ is called a \emph{graded right $R\Gamma$-module} if
\begin{equation}\label{3}
(\forall \xi\in \Delta_V)(\forall d\in D)(\forall \eta\in
\Delta)(\exists \zeta\in\Delta_V)\ V_\xi\Gamma_d R_\eta\subseteq
V_\zeta.
\end{equation}
\end{definition}
\noindent As in the case of graded $\Gamma$-rings, the following
lemma holds.
\begin{lemma}\label{4}
The condition \eqref{3} is satisfied if and only if the following
conditions hold:
\begin{itemize}
    \item[$i)$] $MGA\subseteq M;$
    \item[$ii)$] If $w\in M,$ $\alpha\in\Gamma,$ $x\in A$ with $w\alpha
    x\neq0,$ then $\delta(w\alpha x)$ depends only on
    $\delta(w),$ $d(\alpha),$ and $\delta(x).$
\end{itemize}
\end{lemma}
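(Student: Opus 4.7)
The proof will parallel the argument given for Lemma \ref{lemma1}, since $V$ plays exactly the role that the leftmost factor $R$ played there, and the two-sided multiplication by $\Gamma$ and $R$ is formally identical. First I would dispatch the easy implication that $i)$ and $ii)$ imply \eqref{3}: given $w\in V_\xi$, $\alpha\in\Gamma_d$, $x\in R_\eta$, condition $i)$ places the triple product $w\alpha x$ in $M$, and $ii)$ forces all nonzero such products to lie in a single homogeneous component $V_\zeta$ determined by $\xi,d,\eta$; since $V_\zeta$ is a subgroup, every finite sum from $V_\xi\Gamma_d R_\eta$ stays in $V_\zeta$. If all such products vanish, any $\zeta\in\Delta_V$ works.

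For the converse, assume \eqref{3}. Condition $i)$ is immediate by choosing $\xi,d,\eta$ so that $w\in V_\xi$, $\alpha\in\Gamma_d$, $x\in R_\eta$; then $w\alpha x\in V_\zeta\subseteq M$. To derive $ii)$ I would show, as in the second half of the proof of Lemma \ref{lemma1}, that $i)$ alone already forces it. Take $w,w'\in M$ with $\delta(w)=\delta(w')$; they belong to the same addibility group, so $w+w'\in M$. For $\alpha\in G$ and $x\in A$ with $w\alpha x\neq 0$ and $w'\alpha x\neq 0$, distributivity gives $(w+w')\alpha x=w\alpha x+w'\alpha x$, and the left side lies in $M$ by $i)$. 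Since a sum of two nonzero homogeneous elements is homogeneous only when both summands are addible, i.e.\ have the same grade, we conclude $\delta(w\alpha x)=\delta(w'\alpha x)$. The analogous arguments, varying $\alpha$ among elements of grade $d(\alpha)$ with $w\alpha x\neq 0$ and varying $x$ among elements of grade $\delta(x)$, combine to show that $\delta(w\alpha x)$ depends only on $\delta(w)$, $d(\alpha)$, $\delta(x)$.

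The only real subtlety, and what I expect to be the main obstacle in writing the proof carefully, is the case analysis in handling the situation where some of the intermediate products in the chain (for instance $w\alpha x$ but not $w'\alpha x$) happen to vanish; this is handled exactly as in Lemma \ref{lemma1}, by first fixing the grade of one factor at a time and noting that the conclusion $\delta(w\alpha x)=\delta(w'\alpha'x')$ is vacuous when one of the products is zero. No new idea beyond the one used for Lemma \ref{lemma1} is needed, so I would present the proof by saying it follows the same pattern and only spell out the first step explicitly.
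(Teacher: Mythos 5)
Your proposal is correct and takes exactly the route the paper intends: the paper omits the proof of Lemma \ref{4}, remarking only that it holds ``as in the case of graded $\Gamma$-rings,'' i.e.\ by transcribing the proof of Lemma \ref{lemma1} with $V$ in place of the left-hand factor $R$, which is precisely what you do. Your handling of the easy direction, of $i)$ following from \eqref{3}, and of $i)\Rightarrow ii)$ via addibility of $w+w'$ and distributivity matches the paper's argument step for step.
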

\noindent We proceed by giving the notion of a graded gamma module
from the homogeneous aspect.
\begin{definition}
The ordered pair $(M,A),$ where $M$ is an abelian homogroupoid and
$A$ a $G$-anneid, is called an \emph{$AG$-moduloid} (right) if the
mapping $(w,\alpha,x)\to w\alpha x$ $(w\in M, \alpha\in G, x\in
A)$ has the following properties:
\begin{itemize}
    \item[$i)$] $x\#y\Rightarrow w\alpha x\#w\alpha y\wedge w\alpha(x+y)=w\alpha x+w\alpha y;$
    \item[$ii)$] $w\#w'\Rightarrow w\alpha x\#w'\alpha x\wedge (w+w')\alpha x=w\alpha x+w'\alpha x;$
    \item[$iii)$] $w\beta(x\alpha y)=(w\beta x)\alpha y,$
\end{itemize}
for all $w,w'\in M,$ $\alpha,\beta\in G$ and $x,y\in A.$
\end{definition}
\noindent We may define external multiplication of degrees as
follows. If $\xi\in\Delta_V,$ $d\in D$ and $\eta\in\Delta,$ then
$\xi d\eta:=\zeta\in\Delta_V$ if $\{0\}\neq V_\xi\Gamma_d
R_\eta\subseteq V_\zeta$ and $\xi d\eta=0$ otherwise.
\begin{remark}
Notice that every $G$-anneid $A$ may be observed as an
$AG$-moduloid.
\end{remark}
\section{Homomorphisms and factor structures}
\noindent Throughout this section, $A$ denotes a $G$-anneid and
$M$ an $AG$-moduloid.
\begin{definition}
A nonempty subset $N$ of an $AG$-moduloid $M$ is called a
\emph{submoduloid} of $M$ if:
\begin{itemize}
    \item[$i)$] $(\forall x,y\in N)\ x\#y\Rightarrow x-y\in N;$
    \item[$ii)$] $(\forall x\in N)(\forall\alpha\in G)(\forall a\in A)\ x\alpha a\in N.$
\end{itemize}
A submoduloid of an $AG$-moduloid $A$ is called a \emph{right
ideal} of a $G$-anneid $A.$ We similarly define a \emph{left
ideal} and an \emph{ideal} (\emph{two-sided}) of $A.$
\end{definition}
\begin{definition}
An ordered triple $(\kappa,\varphi,\theta)$ is called a
\emph{quasihomomorphism of $AG$-moduloids} $M$ and $M'$ if:
\begin{itemize}
    \item[$i)$] $\kappa:M\to M'$ is a quasihomomorphism;
    \item[$ii)$] $\varphi:G\to G$ is a quasiisomorphism;
    \item[$iii)$] $\theta:A\to A$ is a quasiisomorphism;
    \item[$iv)$] $\kappa(m\alpha a)=\kappa(m)\varphi(\alpha)\theta(a)$
    $(m\in M, \alpha\in G, a\in A).$
\end{itemize}
It is called a \emph{homomorphism} if $\kappa,$ $\varphi,$
$\theta$ are moreover homomorphisms.
\end{definition}
\begin{definition}
An ordered pair $(\theta,\varphi)$ is called a
\emph{quasihomomorphism of $G$-anneids} $A$ and $A'$ if:
\begin{itemize}
    \item[$i)$] $\theta:A\to A'$ is a quasihomomorphism;
    \item[$ii)$] $\varphi:G\to G$ is a quasiisomorphism;
    \item[$iii)$] $\theta(a\alpha b)=\theta(a)\varphi(\alpha)\theta(b)$
    $(a,b\in A, \alpha\in G).$
\end{itemize}
It is called a \emph{homomorphism} if $\theta$ and $\varphi$ are
moreover homomorphisms.
\end{definition}
\noindent For future purposes, let us mention that, generally,
when we write $\sum_{j=1}^na_j,$ where $a_j$ are elements of a
$G$-anneid $A,$ it is assumed that $a_j$ are mutually addable.\\
If $I$ and $J$ are right (left, two-sided) ideals of a $G$-anneid
$A,$ then we define $I+J$ to be the set of all $x\in A$ such that
$x=a+b,$ where $a\in I,$ $b\in J.$ It is easy to verify that $I+J$
is also a right (left, two-sided) ideal of $A.$ It is also an easy
exercise to prove that the intersection of right (left, two-sided)
ideals of $A$ is again a right (left, two-sided) ideal of $A.$\\
The smallest right ideal which contains an element $a$ of a
$G$-anneid $A$ is called the \emph{principal right ideal}
generated by $a$ and is denoted by $|a\rangle.$ The
\emph{principal left} $\langle a|$ and the \emph{ideal} $\langle
a\rangle$ generated by $a$ are defined similarly.\\
Like in the case of gamma rings, it is clear that
$|a\rangle=\mathbb{Z}a+aGA,$ $\langle a|=\mathbb{Z}a+AGa,$ and
$\langle a\rangle=\mathbb{Z}a+aGA+AGa+AGaGA.$\\
If $I$ is an ideal of a $G$-anneid $A,$ then $A/I$ is a $G$-anneid
if we put \[(a+I)\alpha(b+I)=a\alpha b+I,\qquad (a+I, b+I\in
A/I)\]
and we call this $G$-anneid a \emph{factor $G$-anneid}.\\
If $N$ is an $AG$-submoduloid of an $AG$-moduloid $M,$ then we
define a \emph{factor $AG$-moduloid} $M/N$ similarly.
\begin{definition}
An $AG$-moduloid $M$ is called \emph{regular} if for all $x\in M,$
$0\neq x\alpha a\#x\beta b\neq0$ implies $\alpha\#\beta$ and
$a\#b.$ A gamma anneid $A$ is called \emph{right regular}
(\emph{left regular}) if it is regular as a right $AG$-moduloid
(left $AG$-moduloid). A gamma anneid is called \emph{regular} if
it is both left and right regular.
\end{definition}
\section{The Jacobson radical of a gamma anneid}
\noindent Jacobson radicals of $\Gamma$-rings are extensively
treated (see e.g. \cite{cl} and \cite{rs}), and here, we will
introduce and observe these radicals for gamma anneids by
following the concepts and results obtained for rings in
\cite{jac} and for anneids in \cite{ha} and in the final section
we will obtain the relations among these radicals. All results can
be easily translated to the language of graded gamma rings.\\
Let $M$ be an $AG$-moduloid, $N$ an $AG$-submoduloid and let $S$
be a subset of $M.$ If we define the set $(N:S)$ to be the set of
$a\in A$ such that $SGa\subseteq N,$ then $(N:S)$ is a right ideal
of $A.$ Indeed, let $b,b'\in (N:S),$ $b\#b',$ $\alpha,\beta\in G,$
$a\in A$ and $s\in S.$ Then $SG(b-b')=SGb-SGb'\subseteq N$ and
$(s\beta b)\alpha a\in N\alpha a\subseteq N.$
\begin{lemma}
If $M$ is a regular $AG$-moduloid and $x\in M,$ $\alpha\in G,$
then $A/(0:x)_\alpha\cong x\alpha A,$ where $(0:x)_\alpha=\{a\in
A\ |\ x\alpha a=0\}.$
\end{lemma}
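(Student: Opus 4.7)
The natural candidate is the map $\phi:A\to x\alpha A$ defined by $\phi(a)=x\alpha a$. My plan is to show $\phi$ is a surjective homomorphism of $AG$-moduloids whose kernel is exactly $(0:x)_\alpha$, and then apply the first-isomorphism mechanism. First I would verify that $(0:x)_\alpha$ is indeed a right ideal of $A$ (so that the factor $A/(0:x)_\alpha$ makes sense as an $AG$-moduloid): if $a\#b$ lie in $(0:x)_\alpha$, then axiom $i)$ for $AG$-moduloids gives $x\alpha(a-b)=x\alpha a-x\alpha b=0$, and for $a\in(0:x)_\alpha$, $\beta\in G$, $c\in A$ the associativity axiom $iii)$ gives $x\alpha(a\beta c)=(x\alpha a)\beta c=0$.

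Next I would check that $x\alpha A$ is a submoduloid of $M$. Closure under the right action is just the associativity $w\beta(x\alpha y)=(w\beta x)\alpha y$ applied with $w=x$, which yields $(x\alpha a)\beta c=x\alpha(a\beta c)\in x\alpha A$. Closure under addibility-and-subtraction is the first place regularity of $M$ enters: if $x\alpha a\# x\alpha b$ are both nonzero, regularity (applied with the same $\alpha=\beta$) forces $a\# b$, so $a-b$ exists in $A$ and $x\alpha a-x\alpha b=x\alpha(a-b)\in x\alpha A$.

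Now I would show $\phi$ is a surjective homomorphism. Surjectivity is immediate from the definition of $x\alpha A$. The partial additivity $\phi(a)\#\phi(b)$ and $\phi(a+b)=\phi(a)+\phi(b)$ when $a\#b$ is axiom $ii)$ for $AG$-moduloids, and compatibility with the action $\phi(a\beta c)=\phi(a)\beta c$ is again the associativity axiom $iii)$. For $\phi$ to be a genuine homomorphism (not merely a quasihomomorphism) one must show that addibility of nonzero images implies addibility of the originals: if $\phi(a)=x\alpha a$ and $\phi(b)=x\alpha b$ are nonzero and addible, regularity of $M$ immediately gives $a\# b$. By construction the kernel of $\phi$ is precisely $(0:x)_\alpha$, so the induced map $\bar\phi:a+(0:x)_\alpha\mapsto x\alpha a$ from $A/(0:x)_\alpha$ to $x\alpha A$ is a well-defined bijective homomorphism of $AG$-moduloids, which is the desired isomorphism.

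The main obstacle is the partial-operation nature of the homogroupoid setting: in the classical ring-theoretic first-isomorphism theorem addibility is automatic, but here one must carefully transport addibility across $\phi$ both ways (image-to-source and source-to-image), and this is exactly where the regularity hypothesis is essential. Once that is handled, everything else reduces to straightforward applications of the $AG$-moduloid axioms $i)$--$iii)$ and the way the factor $A/(0:x)_\alpha$ is built coset-by-coset within each addibility group $A(a)$.
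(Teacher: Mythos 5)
Your proposal is correct and follows exactly the paper's route: the paper defines $\kappa(a)=x\alpha a$ (with the identity maps on $G$ and $A$ as the other two components of the triple), asserts it is a homomorphism with $\ker\kappa=(0:x)_\alpha$, and stops there. You have simply supplied the verifications the paper labels ``obviously,'' correctly locating where regularity is needed, namely to upgrade $\kappa$ from a quasihomomorphism to a homomorphism by transporting addibility of nonzero images back to the originals.
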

\begin{proof}
Let $\kappa:A\to x\alpha A$ be the mapping defined with
$\kappa(a)=x\alpha a,$ $\varphi:G\to G$ and $\theta:A\to A$
identities. Obviously, $(\kappa,\varphi,\theta)$ is a homomorphism
and $\ker\kappa=(0:x)_\alpha.$
\end{proof}
\begin{definition}
An $AG$-moduloid $M$ is called \emph{irreducible} if $MGA$ is
nonzero and if $M$ has no nontrivial submoduloids.
\end{definition}
\noindent The following notion is inspired by the notions of a
\emph{primitive} and a \emph{largely primitive anneid} from
\cite{ha}.
\begin{definition}
A $G$-anneid $A$ is called \emph{primitive} if there exists an
irreducible regular and faithful $AG$-moduloid $M$
($(0:M)=\{0\}$). It is called \emph{largely primitive} if there
exists an irreducible and faithful $AG$-moduloid (whether regular
or not).
\end{definition}
\begin{definition}
An ideal $I$ of a $G$-anneid $A$ is called \emph{primitive}
(\emph{largely primitive}) if a $G$-anneid $A/I$ is primitive
(largely primitive).
\end{definition}
\noindent In \cite{ha}, the \emph{Jacobson} and the \emph{large
Jacobson radical of an anneid} are introduced, and we do the same
for $G$-anneids.
\begin{definition}
The \emph{Jacobson radical} (\emph{large Jacobson radical}) $J(A)$
of a $G$-anneid $A$ is the intersection of annihilators of all
irreducible regular $AG$-moduloids (irreducible $AG$-moduloids). A
$G$-anneid $A$ is called \emph{semisimple} if $J(A)=\{0\}.$
\end{definition}
\noindent As in the case of classical rings, one may prove the
following theorem, see \cite{jac}.
\begin{theorem}
The Jacobson radical of a gamma anneid is the intersection of its
primitive ideals.
\end{theorem}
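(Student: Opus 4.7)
The plan is to prove a double inclusion by identifying the set of primitive ideals of $A$ with the set of annihilators $(0:M)$ of irreducible regular $AG$-moduloids $M$. Once this identification is established, the two intersections coincide by definition of $J(A)$.

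First I would verify that for any irreducible regular $AG$-moduloid $M$ the annihilator $(0:M)=\{a\in A\mid MGa=\{0\}\}$ is a two-sided ideal (the argument mirrors the one given before the preceding lemma that $(N:S)$ is a right ideal; left-sidedness uses $iii)$ of the $AG$-moduloid axioms to push an element of $A$ through the action). Then I would show that $(0:M)$ is a primitive ideal. For this, equip $M$ with the induced action of the factor $G$-anneid $\overline{A}=A/(0:M)$ via $m\overline{\alpha}(a+(0:M)):=m\alpha a$; this is well-defined precisely because $(0:M)$ annihilates $M$. Irreducibility and regularity of $M$ as an $AG$-moduloid transfer to $M$ as an $\overline{A}G$-moduloid (submoduloids for the two actions coincide, and the implication $0\neq x\alpha a\#x\beta b\Rightarrow \alpha\#\beta, a\#b$ is unaffected by passing to cosets). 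By construction $M$ is faithful over $\overline{A}$, so $\overline{A}$ is primitive, i.e. $(0:M)$ is a primitive ideal of $A$.

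For the reverse inclusion I would take a primitive ideal $P$, so that $\overline{A}:=A/P$ is primitive, and produce an irreducible regular faithful $\overline{A}G$-moduloid $M$. Pull $M$ back along the canonical projection $A\to\overline{A}$ to obtain an $AG$-moduloid structure on $M$ by $m\alpha a:=m\alpha(a+P)$. One checks routinely that the four axioms in the definition of an $AG$-moduloid are inherited, that submoduloids again coincide with those over $\overline{A}$ (hence $M$ is irreducible as an $AG$-moduloid), that regularity is preserved, and that $(0:M)=P$ exactly because $M$ was faithful over $A/P$. This shows that every primitive ideal appears among the annihilators $(0:M)$.

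Combining the two inclusions yields
\[
J(A)=\bigcap_{M}(0:M)=\bigcap_{P}P,
\]
where $M$ ranges over irreducible regular $AG$-moduloids and $P$ ranges over primitive ideals. The step I expect to require most care is the verification that passing between $A$- and $(A/P)$-actions respects all the partial-operation data of the homogroupoids $M$, $A$ and $G$; in particular, ensuring that irreducibility in the sense of having no nontrivial \emph{submoduloid} (not merely no nontrivial subgroup closed under the action) is preserved under change of scalars, and that regularity, which involves the addibility relation $\#$ rather than equality, is not disturbed by the identification $a+P\mapsto a$. Everything else is an almost verbatim transcription of the classical ring-theoretic proof, adapted to the $G$-anneid setting along the lines indicated in \cite{ha} and \cite{jac}.
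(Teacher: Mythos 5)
Your proposal is correct and is precisely the argument the paper has in mind: the paper offers no written proof, stating only that the theorem is proved ``as in the case of rings,'' and your identification of primitive ideals with the annihilators $(0:M)$ of irreducible regular $AG$-moduloids, together with the change-of-scalars checks between $A$ and $A/P$, is the standard proof carried over to the $G$-anneid setting. The delicate points you flag (preservation of submoduloids, of the addibility relation $\#$, and of regularity under passage to cosets) do all go through, since the addibility groups of the factor anneid are exactly the images $A(c)/(A(c)\cap P)$ of those of $A$.
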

\noindent In \cite{ha}, notions of strictly cyclic moduloids and
modular right ideals of anneids are introduced in order to
describe the Jacobson radical of an anneid. Here we do similarly
for $AG$-moduloids and $G$-anneids.
\begin{definition}
An $AG$-moduloid $M$ is called $\alpha$-\emph{strictly cyclic} if
there exists an element $x\in M$ such that $x\alpha A=M,$ where
$\alpha\in G.$ Such an element $x$ is called an
$\alpha$-\emph{strict generator} of $M.$ If $x\alpha A=M$ for
every $\alpha\in G,$ then $M$ is called \emph{strictly cyclic} and
$x$ is called a \emph{strict generator} of $M.$
\end{definition}
\begin{definition}
A right ideal $I$ of a $G$-anneid $A$ is called \emph{modular} if
there exist elements $u\in A$ and $\alpha\in G$ such that for all
$a\in A,$ $a$ and $u\alpha a$ are congruent modulo $I.$ Such an
element $u$ is called an $\alpha$-\emph{left identity modulo $I.$}
\end{definition}
\begin{definition}
Let $A$ be a $G$-anneid, $\Delta$ and $D$ grading sets
corresponding to $A$ and $G,$ respectively. An element
$\delta\in\Delta$ is called an $\alpha$-\emph{idempotent}, for
$0\neq\alpha\in G,$ if \[\delta d(\alpha)\delta=\delta,\] where
$d(\alpha)\in D$ is the degree of $\alpha.$ It is called
\emph{idempotent} if it is $\alpha$-idempotent for all $\alpha\in
G.$
\end{definition}
\begin{remark}\label{rem1}
To say that $u$ is an $\alpha$-left identity modulo $I,$ is the
same as to say that for all $a\in A,$ either $a$ and $u\alpha a$
belong to $I,$ or $a\#u\alpha a,$ $a-u\alpha a\in I.$ If moreover
$u\in I,$ we have that $I=A.$ Note also that if $I$ is a proper
ideal, then $\delta(u)$ is an $\alpha$-idempotent, that is,
$\delta(u)=\delta(u)d(\alpha)\delta(u).$ Indeed, $u\notin I$
implies $u\#u\alpha u$ and $u-u\alpha u\in I.$ Hence, $u\alpha
u\notin I$ and in particular, $u\alpha u\neq0.$
\end{remark}
\begin{theorem}
If $M$ is a strictly cyclic regular $AG$-moduloid, then $M\cong
A/I,$ where $I$ is a right modular ideal of a $G$-anneid $A.$ If
$I$ is a right modular ideal of a $G$-anneid $A,$ then
$I=(0:x)_\alpha,$ where $x$ is an $\alpha$-strict generator of an
$AG$-moduloid $M$, which is not necessarily regular.
\end{theorem}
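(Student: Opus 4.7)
The plan is to prove the two implications by inverse constructions.

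For the first implication, suppose $M$ is strictly cyclic and regular with strict generator $x$, so that $x\alpha A = M$ for every $\alpha \in G$. I would fix any $\alpha \in G$ and take $I = (0:x)_\alpha$. The preceding lemma immediately gives an isomorphism $M = x\alpha A \cong A/I$, so the task reduces to showing that $I$ is a \emph{right modular} ideal. Checking that $I$ is a right ideal is routine: the distributivity axiom of an $AG$-moduloid handles closure under addibility, while the associativity $x\alpha(a\beta c) = (x\alpha a)\beta c$ handles closure under right multiplication by elements of $A$. For the modular part, strict cyclicity yields some $u \in A$ with $x\alpha u = x$, and I claim this $u$ is an $\alpha$-left identity modulo $I$.

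For the second implication, the natural candidate is $M = A/I$, viewed as a factor $AG$-moduloid: since $A$ is a right $AG$-moduloid over itself and a right ideal $I$ is exactly a right $AG$-submoduloid, this construction makes sense even though $I$ need not be two-sided. Take $x = u + I$, where $u$ is an $\alpha$-left identity modulo $I$. For each $a \in A$, the modular congruence $a \equiv u\alpha a \pmod{I}$ gives $x\alpha a = u\alpha a + I = a + I$, which simultaneously proves that $x\alpha A = A/I$ (so $x$ is an $\alpha$-strict generator of $M$) and, invoking Remark \ref{rem1}, that $(0:x)_\alpha = \{a \in A : u\alpha a \in I\} = I$.

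The main obstacle is verifying modularity in the first implication, because one cannot blithely write $x\alpha(a - u\alpha a) = x\alpha a - x\alpha(u\alpha a) = 0$ without first ensuring the addibility $a \# u\alpha a$; partial addition in homogroupoids demands this care. The key is that associativity gives $x\alpha(u\alpha a) = (x\alpha u)\alpha a = x\alpha a$. If $a \in I$ then $x\alpha(u\alpha a) = x\alpha a = 0$, so $u\alpha a \in I$ and the modular congruence holds trivially. If $a \notin I$, both $x\alpha a$ and $x\alpha(u\alpha a)$ are nonzero and equal (hence mutually addible), so the regularity of $M$ (applied with $\beta = \alpha$, $b = u\alpha a$) forces $a \# u\alpha a$; the subtraction $a - u\alpha a$ is then well defined in the homogroupoid, and lands in $I$ by the same identity. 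All remaining verifications reduce to standard bookkeeping within the $AG$-moduloid axioms.
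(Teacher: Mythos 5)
Your proposal is correct and follows essentially the same route as the paper: both directions hinge on the preceding lemma for the isomorphism $M = x\alpha A \cong A/(0:x)_\alpha$, on extracting $u$ with $x\alpha u = x$ from $x \in x\alpha A$ and using regularity of $M$ to secure the addibility $a \# u\alpha a$ before forming $a - u\alpha a \in (0:x)_\alpha$, and, for the converse, on taking $u + I$ as the $\alpha$-strict generator of $A/I$ with $(0:u+I)_\alpha = I$. Your explicit attention to where partial addibility must be justified matches exactly the role regularity plays in the paper's argument.
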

\begin{proof}
We follow the proof presented for moduloids in \cite{ha}. Let $M$
be a strictly cyclic regular $AG$-moduloid with a strict generator
$x.$ Since $M=x\alpha A\cong A/(0:x)_\alpha,$ we only need to
prove that $(0:x)_\alpha$ is modular. Since $x\in x\alpha A$ and
since $M$ is regular, there exists $a\in A$ such that $x=x\alpha
a.$ Let $b\in A.$ Then $x\alpha b=(x\alpha a)\alpha b.$ If
$x\alpha b=x\alpha(a\alpha b)=0,$ then both $b$ and $a\alpha b$
belong to $(0:x)_\alpha.$ If $x\alpha b=x\alpha(a\alpha b)\neq0,$
then, since $M$ is regular, $b\#a\alpha b$ and hence
$x\alpha(b-a\alpha b)=0,$ so $b-a\alpha b\in(0:x)_\alpha.$ Thus,
$(0:x)_\alpha$ is modular.\\
Conversely, if $I$ is a right modular ideal of a $G$-anneid $A$
with $a$ as an $\alpha$-left identity modulo $I,$ then $a+I$ is an
$\alpha$-strict generator of $A/I$ and $I=(0:a+I)_\alpha.$
\end{proof}
\begin{cor}
If $I$ is a right modular ideal of $A,$ then $I$ contains $(I:A).$
\end{cor}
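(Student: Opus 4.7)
The plan is to show that every $a \in (I:A)$ already belongs to $I$ by playing the defining property of $(I:A)$ against the modularity witnesses $u \in A$ and $\alpha \in G$ furnished by the hypothesis.

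First, unfold the definition: from the opening discussion of this section, $(I:A) = \{a \in A \mid AGa \subseteq I\}$, so $a \in (I:A)$ forces $u\alpha a \in I$ in particular. Next, apply modularity of $I$ to this same element $a$: by Remark~\ref{rem1}, either $a \in I$ outright, in which case we are done, or else $a \# u\alpha a$ together with $a - u\alpha a \in I$.

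In the remaining case, the identity $a = u\alpha a + (a - u\alpha a)$ exhibits $a$ as the sum of two elements of $I$, and these two summands are addible because their sum $a$ lies in $A$, which places them in a common addibility group. The only auxiliary fact required is that an ideal is closed under sums of addible elements; this is not stated as an axiom but follows from the submoduloid axiom $x \# y \Rightarrow x - y \in I$ by first noting that $0 \in I$ (take $z - z$ for any $z \in I$, which is legal since $z \# z$), then that $-y = 0 - y \in I$ for every $y \in I$, and finally that $x + y = x - (-y) \in I$ whenever $x \# y$ (using that $-y$ sits in the same addibility group as $y$).

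I do not expect any genuine obstacle here; the only mild subtlety is the derivation that submoduloids are closed under addition of addibles, which is a short exercise in the homogroupoid axioms rather than a substantive step. Once that lemma is in hand, the corollary reduces to a single application of the modularity witness.
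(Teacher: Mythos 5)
Your argument is correct. The paper gives no explicit proof, but your computation is the standard one: for $a\in(I:A)=\{a\in A\mid AGa\subseteq I\}$ one has $u\alpha a\in I$, and Remark~\ref{rem1} then gives either $a\in I$ outright or $a\# u\alpha a$ with $a-u\alpha a\in I$, whence $a=(a-u\alpha a)+u\alpha a\in I$ by closure of the ideal under addition of addible elements. Two small remarks. First, your justification that the two summands are addible (``because their sum $a$ lies in $A$'') is not quite the right reason --- every element lies in $A$; the correct reason is that $a\# u\alpha a$ already places $a$, $u\alpha a$ and hence $a-u\alpha a$ in one addibility group $A(\delta(a))$, so the two summands are addible. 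Second, since the statement is placed as a corollary of the preceding theorem, the intended route is probably even shorter: that theorem (and its proof) give $I=(0:x)_\alpha$ with $x=u+I$ an $\alpha$-strict generator of $A/I$, and for $a\in(I:A)$ one has $u\alpha a\in AGa\subseteq I$, so $x\alpha a=u\alpha a+I=0+I$ and $a\in(0:x)_\alpha=I$. Both routes are essentially the same computation; yours has the merit of not invoking the theorem, at the cost of the small lemma that submoduloids are closed under sums of addibles, which you derive correctly from the axioms.
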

\noindent Following propositions can be proved by similar
arguments given for the case of classical rings \cite{jac}.
\begin{proposition}
Every right modular ideal is contained in a maximal right ideal.
\end{proposition}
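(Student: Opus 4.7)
The plan is to apply Zorn's lemma to the family of proper right ideals containing $I$, exploiting the key observation behind Remark \ref{rem1}: any right ideal of $A$ that contains both $I$ and an $\alpha$-left identity $u$ modulo $I$ must coincide with $A$. Assuming $I$ is proper (otherwise the statement is vacuous), I would fix such a $u$ and consider
\[\mathcal{F}=\{J\mid J\text{ is a right ideal of }A,\ I\subseteq J,\ u\notin J\}.\]
Then $\mathcal{F}$ is nonempty, since $I\in\mathcal{F}$: by Remark \ref{rem1}, $u\in I$ would force $I=A,$ contradicting properness. Moreover every $J\in\mathcal{F}$ is automatically a proper right ideal, since $u\notin J.$

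Next I would verify that $\mathcal{F}$ is inductively ordered by inclusion. Given a chain $\{J_\lambda\}_{\lambda\in\Lambda}$ in $\mathcal{F},$ set $J=\bigcup_\lambda J_\lambda.$ If $x,y\in J$ with $x\# y,$ then by totality of the chain both lie in a common $J_\lambda,$ whence $x-y\in J_\lambda\subseteq J;$ closure under the ternary action $JGA\subseteq J$ is immediate elementwise. Clearly $I\subseteq J,$ and $u\notin J,$ for otherwise $u$ would belong to some $J_\lambda\in\mathcal{F}.$ Hence $J\in\mathcal{F}$ is an upper bound for the chain, and by Zorn's lemma $\mathcal{F}$ contains a maximal element $M.$

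Finally, I would show that $M$ is actually a maximal right ideal of $A.$ If $M\subsetneq M'$ for some right ideal $M',$ then $M'\notin\mathcal{F}$ forces $u\in M'.$ Repeating the argument of Remark \ref{rem1} inside $M':$ for each $a\in A$ we have $u\alpha a\in M',$ and either $a\in I\subseteq M'$ or $a\# u\alpha a$ with $a-u\alpha a\in I\subseteq M',$ so in the latter case $a=(a-u\alpha a)+u\alpha a\in M'.$ Thus $M'=A,$ establishing maximality of $M.$ The only delicate point in this scheme is the partial nature of addition in $A,$ but it causes no difficulty: the totality of any chain guarantees that addibility arguments can be carried out within a single member $J_\lambda,$ so the classical ring-theoretic proof transfers essentially unchanged.
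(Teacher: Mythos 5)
Your proof is correct and follows exactly the classical Zorn's-lemma argument from Jacobson that the paper invokes without writing out (``can be proved by similar arguments given for the case of rings''): one restricts to right ideals containing $I$ but excluding the $\alpha$-left identity $u$, takes a maximal element, and uses Remark \ref{rem1} to see that any strictly larger right ideal must be all of $A$. The one point needing care in the anneid setting --- that unions over chains remain right ideals despite the partial addition, and that $a=(a-u\alpha a)+u\alpha a$ is a legitimate decomposition inside a single addibility group --- is handled properly.
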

\begin{proposition}
An $AG$-moduloid $M$ is irreducible if and only if:
\begin{itemize}
    \item[$a)$] $M\neq\{0\};$
    \item[$b)$] Every nonzero element of $M$ is a strict generator
    of $M.$
\end{itemize}
\end{proposition}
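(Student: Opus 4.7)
The plan is to handle the two implications separately, mimicking Jacobson's classical argument that every nonzero element of an irreducible module generates the whole module.

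For the easy direction ($\Leftarrow$), assume (a) and (b). Then $M\neq\{0\}$ is immediate, and picking any $0\neq x\in M$ together with any nonzero $\alpha\in G$ one has $x\alpha A=M\neq\{0\}$, so $MGA\neq\{0\}$. Given any nonzero submoduloid $N\subseteq M$, choose $0\neq x\in N$; by (b), $x\alpha A=M$, and since $N$ is closed under the $GA$-action we get $M=x\alpha A\subseteq N$, whence $N=M$. Hence $M$ is irreducible.

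For the harder direction ($\Rightarrow$), assume $M$ is irreducible. Part (a) is immediate from $MGA\neq\{0\}$. For (b) fix $0\neq x\in M$ and a nonzero $\alpha\in G$; the goal is $x\alpha A=M$. The first step is to show $xGA\neq\{0\}$ via a standard submoduloid argument: the set $N_0:=\{y\in M\mid yGA=\{0\}\}$ is a submoduloid, because closure under addible subtraction comes from distributivity $(y_1-y_2)\gamma c=y_1\gamma c-y_2\gamma c=0$, and closure under the $GA$-action follows from moduloid associativity $(y\beta b)\gamma c=y\beta(b\gamma c)=0$ (using $b\gamma c\in A$ and $y\in N_0$). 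By irreducibility $N_0\in\{\{0\},M\}$, and $N_0=M$ would contradict $MGA\neq\{0\}$; hence $N_0=\{0\}$ and in particular $xGA\neq\{0\}$.

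The second step is to show $x\alpha A=M$. One would like to prove $x\alpha A$ is itself a submoduloid and then rule out $x\alpha A=\{0\}$ using the first step. The $GA$-action closure of $x\alpha A$ is immediate from $(x\alpha a)\beta b=x\alpha(a\beta b)$. I expect the main obstacle to be twofold: first, proving closure of $x\alpha A$ under addible subtraction — this is delicate when $x\alpha a_1\# x\alpha a_2$ but $a_1\not\# a_2$ in $A$, and forces one to work inside the linearization $\overline{A}$ and argue that the resulting $x\alpha(a_1-a_2)$ still lies in the homogeneous image; second, upgrading ``$xGA\neq\{0\}$'' to ``$x\alpha A\neq\{0\}$ for our specific $\alpha$'', which likely requires a second auxiliary submoduloid construction that exploits moduloid associativity to transfer nonzero products from one grade of $G$ to another. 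Once these two points are settled, irreducibility forces $x\alpha A=M$ and part (b) follows.
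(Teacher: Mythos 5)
Your overall strategy is the intended one: the paper gives no argument of its own here, saying only that these propositions ``can be proved by similar arguments given for the case of rings'' in Jacobson, and your backward direction together with Step 1 of the forward direction (the annihilator set $N_0=\{y\in M\mid yGA=\{0\}\}$ is a submoduloid, is not all of $M$ because $MGA\neq\{0\}$, hence is $\{0\}$) is correct and complete. The problem is Step 2. You do not prove it; you list two obstacles and write ``once these two points are settled, irreducibility forces $x\alpha A=M$.'' Those two points are exactly where the homogeneous setting diverges from Jacobson's classical argument, so they are the content of the proposition, not loose ends.

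Concretely: (1) this proposition carries no regularity hypothesis (note that the very next proposition in the paper does), so from $0\neq x\alpha a_1\# x\alpha a_2\neq0$ you get no control over $a_1$ and $a_2$; if $a_1\not\# a_2$, then in the linearization $x\alpha(a_1-a_2)=x\alpha a_1-x\alpha a_2$ is a single homogeneous element of $M$, but $a_1-a_2$ is not homogeneous, and there is no reason this element equals $x\alpha c$ for a single homogeneous $c\in A$. Hence $x\alpha A$ is not visibly closed under addible differences, i.e.\ not visibly a submoduloid, and the ``nonzero submoduloid, hence all of $M$'' step has nothing to apply to. (2) The fixed-$\alpha$ analogue of your Step 1 does not go through: the set $\{y\in M\mid y\alpha A=\{0\}\}$ is closed under addible differences but not under the $G$-action, since $(y\beta b)\alpha a=y\beta(b\alpha a)$ lands in $y\beta A$ rather than $y\alpha A$; so $xGA\neq\{0\}$ does not immediately yield $x\alpha A\neq\{0\}$ for your chosen $\alpha$. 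What irreducibility gives you directly is that the submoduloid generated by $xGA$ --- the set of sums of mutually addible elements $x\gamma_i a_i$ --- equals $M$; bridging from that to the single-product statement $x\alpha A=M$ is precisely what is missing, and your proposal as written does not close it.
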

\begin{proposition}
A regular $AG$-moduloid $M$ is irreducible if and only if $M\cong
A/I$ for some right modular maximal ideal $I$ of $A.$
\end{proposition}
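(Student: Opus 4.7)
The plan is to model the argument on the classical ring-theoretic characterization of irreducible modules as $R/I$ for modular maximal right ideals $I$, adapting the bookkeeping to the partial addition of an $AG$-moduloid. The two directions will use the two preceding propositions (every nonzero element of an irreducible moduloid is a strict generator; the existence of an $\alpha$-left identity for a modular ideal) together with the theorem that $x\alpha A\cong A/(0:x)_\alpha$ and the fact that $(0:x)_\alpha$ is modular whenever $x$ is an $\alpha$-strict generator of a regular moduloid.

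For the forward direction, I would assume $M$ is regular and irreducible, pick any nonzero $x\in M$, and fix some $\alpha\in G$ (which is possible because $MGA\neq\{0\}$ forces $G\neq\{0\}$). By the previous proposition $x$ is a strict generator, so $M=x\alpha A\cong A/I$ with $I=(0:x)_\alpha$, and the preceding theorem gives that $I$ is a right modular ideal. The substantive step is maximality. Suppose $I\subsetneq J\subseteq A$ for some right ideal $J$, pick $b\in J\setminus I$, so that $x\alpha b\neq 0$. By irreducibility $x\alpha b$ is again a strict generator, so $(x\alpha b)\alpha A=M$; in particular there exists $c\in A$ with $x=(x\alpha b)\alpha c=x\alpha(b\alpha c)$, using the Nobusawa-style associativity built into axiom $iii)$ of an $AG$-moduloid. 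Then for every $d\in A$ we have $x\alpha d=x\alpha((b\alpha c)\alpha d)$; applying regularity of $M$ to the pair $x\alpha d$ and $x\alpha((b\alpha c)\alpha d)$ gives $d\,\#\,(b\alpha c)\alpha d$ with $d-(b\alpha c)\alpha d\in I\subseteq J$ (and when both sides vanish, $d$ and $(b\alpha c)\alpha d$ lie in $I$ separately). Since $(b\alpha c)\alpha d\in J$ because $J$ is a right ideal, $d\in J$; hence $J=A$ and $I$ is maximal.

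For the backward direction, let $I$ be a right modular maximal ideal with $\alpha$-left identity $u$. Remark \ref{rem1} already gives $u\notin I$ (otherwise $I=A$), so $M=A/I$ is nonzero. The correspondence between right ideals of $A$ containing $I$ and submoduloids of $A/I$ shows, via maximality of $I$, that $A/I$ has only the trivial submoduloids. Moreover $u\alpha u\notin I$ by Remark \ref{rem1}, so $(u+I)\alpha(u+I)\neq 0$ in $A/I$, which yields $MGA\neq\{0\}$. Hence $M$ is irreducible by the previous proposition, and the isomorphism $M\cong A/I$ holds by construction.

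The main obstacle is the delicate handling of addibility in the forward direction: the expression $d-(b\alpha c)\alpha d$ is only meaningful when the two summands are addible, and one must split into cases based on whether $x\alpha d$ vanishes, leveraging regularity of $M$ exactly as in Lemma \ref{lemma1}-style arguments. A second, smaller subtlety is that the backward implication needs $A/I$ to inherit regularity so that the biconditional matches the stated hypothesis; this is built into the construction since the $\alpha$-left identity $u+I$ makes the regularity of $A/I$ a direct consequence of the axioms $i)$ and $iii)$ for an $AG$-moduloid, but it should be mentioned explicitly.
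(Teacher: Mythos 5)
Your proof is correct and follows exactly the route the paper intends: the paper gives no argument of its own for this proposition, stating only that it ``can be proved by similar arguments given for the case of rings'' in Jacobson, and your adaptation (strict generators plus $A/(0:x)_\alpha$, the maximality argument via $x=x\alpha(b\alpha c)$ and regularity to make $d-(b\alpha c)\alpha d$ meaningful, and the ideal--submoduloid correspondence for the converse) is precisely that classical argument with the addibility bookkeeping handled properly. One small caveat: your closing remark that regularity of $A/I$ is ``a direct consequence of the axioms $i)$ and $iii)$'' is not right --- regularity is an additional condition, not a consequence of the moduloid axioms --- but this does not affect the proof, since the proposition already hypothesizes that $M$ is regular, so in the backward direction regularity is given rather than something to be derived.
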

\begin{cor}
Every primitive ideal $J$ of a $G$-anneid $A$ has the form $(I:A)$
for some right modular maximal ideal $I$ of $A.$ Conversely, if
$I$ is a right modular maximal ideal of a $G$-anneid $A,$ then
$(I:A)$ is a large primitive ideal (primitive if $A$ is right
regular).
\end{cor}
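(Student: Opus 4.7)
The plan is to lean on the preceding proposition (a regular $AG$-moduloid is irreducible iff it is isomorphic to $A/I$ for some right modular maximal $I$) together with the single bridging identity
\[ (0:A/I) \;=\; \{a\in A : AGa\subseteq I\} \;=\; (I:A), \]
valid for every right ideal $I$ of $A,$ since $a$ annihilates every coset $b+I$ precisely when $bGa\subseteq I$ for all $b\in A.$ Everything else is bookkeeping through the ideal correspondence.

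For the forward direction, I start with a primitive ideal $J$ and unpack the definition: $A/J$ admits an irreducible regular faithful moduloid $M.$ Applying the preceding proposition inside the $G$-anneid $A/J,$ I write $M \cong (A/J)/K$ for some right modular maximal ideal $K$ of $A/J,$ and the ideal correspondence gives $K = I/J$ for a right ideal $I\supseteq J$ of $A.$ The $\alpha$-left identity $u+J$ of $K$ lifts to an $\alpha$-left identity $u$ of $I,$ and maximality of $I/J$ in $A/J$ forces maximality of $I$ among right ideals of $A,$ so $I$ is right modular maximal. Faithfulness of $M$ then reads $(0:M)_{A/J} = (I:A)/J = 0,$ and since $J\subseteq (I:A)$ (because $J$ is two-sided and $J\subseteq I$), I conclude $J = (I:A).$

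For the converse, let $I$ be right modular maximal in $A$ and set $M = A/I.$ I first argue $M$ is irreducible as an $AG$-moduloid: the submoduloids of $M$ pull back to right ideals of $A$ containing $I,$ so by maximality only $\{0\}$ and $M$ occur, while $MGA \neq 0$ because Remark \ref{rem1} gives $u\notin I$ with $\bar u\alpha u = \overline{u\alpha u}\neq 0.$ The bridging identity yields $(0:M) = (I:A),$ so $M$ is an irreducible faithful $A/(I:A)$-moduloid, making $(I:A)$ largely primitive. If $A$ is additionally right regular, then regularity transfers to the factor moduloid $A/I$ via the definition of the factor $AG$-moduloid in Section 5, upgrading $M$ to an irreducible regular faithful moduloid and $(I:A)$ to a primitive ideal.

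The main obstacle is not conceptual but careful translation: one must verify that ``right modular maximal'' behaves correctly under the ideal correspondence between $A/J$ and ideals of $A$ containing $J,$ and that right regularity of $A$ descends to the quotient $AG$-moduloid $A/I.$ Both mirror the familiar ring-theoretic arguments of \cite{jac} and the anneid arguments of \cite{ha}, but they are the only nontrivial ingredients beyond the bridging identity that make the identification $J=(I:A)$ work.
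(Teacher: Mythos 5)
Your argument is correct and is precisely the Jacobson-style argument the paper has in mind: the corollary is stated there without proof (deferred to ``similar arguments given for the case of rings''), and your bridging identity $(0:A/I)=(I:A)$ combined with the preceding proposition on irreducible regular moduloids is the intended route. The two translation points you flag --- the behaviour of modularity and maximality under the correspondence between right ideals of $A/J$ and right ideals of $A$ containing $J,$ and the descent of regularity to the factor moduloid $A/I,$ which rests on the fact that addibility of nonzero cosets of homogeneous elements modulo a homogeneous ideal forces addibility of representatives upstairs --- are indeed the only graded-specific checks, and both go through.
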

\begin{proposition}
The Jacobson radical of a right regular $G$-anneid is the
intersection of its right modular maximal ideals.
\end{proposition}
\section{Structure of the Jacobson radical of a regular gamma anneid}
\noindent Let $A$ be a regular $G$-anneid and $J(A)$ its Jacobson
radical. In \cite{ha}, it is proved that all left identities
modulo a proper right modular ideal of an anneid have the same
degree, and here we have the analogous result.
\begin{lemma}
If $I$ is a proper right modular ideal of $A,$ all left identities
modulo $I$ have the same degree.
\end{lemma}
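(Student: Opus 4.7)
The plan is to combine the defining congruences for left identities modulo $I$ with the left regularity of the $G$-anneid $A$. Let $u_1$ be an $\alpha_1$-left identity and $u_2$ an $\alpha_2$-left identity modulo $I$. Since $I$ is proper, Remark~\ref{rem1} guarantees that $u_1, u_2 \notin I$.

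First, I would apply the defining property of $u_1$ to the element $a=u_2$: because $u_2 \notin I$, the congruence $u_2 \equiv u_1\alpha_1 u_2 \pmod{I}$ must fall in the second alternative of Remark~\ref{rem1}, so $u_2 \# u_1\alpha_1 u_2$ and $u_2 - u_1\alpha_1 u_2 \in I$. In particular $u_1\alpha_1 u_2 \neq 0$ (otherwise $u_2 \in I$), whence $\delta(u_1\alpha_1 u_2) = \delta(u_2)$. Applying the defining property of $u_2$ to $a=u_2$ gives, by exactly the same reasoning, $u_2\alpha_2 u_2 \neq 0$ and $\delta(u_2\alpha_2 u_2) = \delta(u_2)$.

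Having produced two nonzero homogeneous elements $u_1\alpha_1 u_2$ and $u_2\alpha_2 u_2$ sharing the common grade $\delta(u_2)$, they are addible. Both are of the form ``(left factor)\,$\cdot\,u_2$'' with the same fixed right factor $u_2$, so the left regularity of $A$ applied to $u_2$ forces the left factors to be addible: from $0 \neq u_1\alpha_1 u_2 \# u_2\alpha_2 u_2 \neq 0$ we get $u_1 \# u_2$ (and, as a bonus, $\alpha_1 \# \alpha_2$). Since $u_1, u_2 \neq 0$, this yields $\delta(u_1) = \delta(u_2)$.

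The only real obstacle is recognizing which cancellation tool to reach for. The purely grade-theoretic relations one can read off directly — namely $\delta(u_1)d(\alpha_1)\delta(u_2)=\delta(u_2)$ and $\delta(u_2)d(\alpha_2)\delta(u_1)=\delta(u_1)$ together with the idempotence relations of Remark~\ref{rem1} — are not in themselves strong enough to conclude $\delta(u_1)=\delta(u_2)$; one really needs to compare $u_1\alpha_1 u_2$ against the auxiliary element $u_2\alpha_2 u_2$ and then invoke left regularity of $A$. Once that pair is singled out, everything else is forced.
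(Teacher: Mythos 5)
Your argument is correct and is essentially the paper's own proof: the paper likewise forms two products with a common right factor that are congruent to a fixed element not in $I$ (it uses an arbitrary $x\notin I$ where you specialize to $x=u_2$), deduces their addibility, and cancels the common factor by (left) regularity of $A$ to get $u_1\#u_2$. The only cosmetic difference is that you route the addibility of the two products through their common addibility with $u_2$ rather than through the quotient $A/I$.
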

\begin{proof}
Let $a$ be an $\alpha$-identity and $b$ a $\beta$-identity modulo
$I.$ Hence, for all $x\in A,$ we have $x+I=a\alpha x+I=b\beta
x+I.$ If $x\notin I,$ $a\alpha x+I=b\beta x+I\neq0+I,$ which
implies $a\alpha x\#b\beta x,$ and since $A$ is regular, $a\#b$
and $\alpha\#\beta.$
\end{proof}
\begin{definition}
The degree of all left identities modulo $I$ is called the
\emph{degree} of $I.$
\end{definition}
\noindent Let $e\in\Delta^*$ be an element whose degree
$\delta(e)$ is an $\alpha$-idempotent of $\Delta^*,$ that is,
$\delta(e)d(\alpha)\delta(e)=\delta(e).$ Notice that $A(e)$ is
then a $G(\alpha)$-ring. Indeed, let $x,y\in A(e).$ Then $x\#e,$
$y\#e$ and $e\#e\alpha e,$ hence, $x\alpha y\#e$ (see Lemma
\ref{lemma1}). For the sake of simplicity, we will denote
$\delta(e)$ also by $e$ in the sequel.\\
It is stated in \cite{ha1} and proved in \cite{ha} that there
exists a one-to-one correspondence between the maximal modular
right ideals of a regular anneid $A$ of degree $e$ and the maximal
modular right ideals of $A(e).$ Here, we have the following
result.
\begin{theorem}\label{theorem1}
Let $A$ be a regular $G$-anneid and $e$ an $\alpha$-idempotent of
$\Delta^*.$ To every right modular maximal ideal $I$ of $A$ with
degree $e,$ let us assign $I_e=I\cap A(e).$ Also, to every right
maximal ideal $S$ of $A(e),$ let us assign the set $\hat{S}$ of
elements $x\in A$ such that $xGA\cap A(e)\subseteq S.$ This
establishes a one-to-one correspondence between the set of right
modular maximal ideals of $A$ with degree $e$ and the set of right
modular maximal ideals of $A(e).$
\end{theorem}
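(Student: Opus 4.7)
The plan is to verify that the two maps $\Phi: I \mapsto I_e$ and $\Psi: S \mapsto \hat S$ are well-defined with the stated codomains and are mutually inverse. For $\Phi$, fix a right modular maximal ideal $I$ of $A$ with grade $e$ and let $u$ be an $\alpha$-left identity modulo $I$; by the preceding lemma $\delta(u) = e$, so $u \in A(e)$. Closure of $I_e = I \cap A(e)$ under the group operation of $A(e)$ and under $\alpha$-right multiplication by $A(e)$ is immediate from $I$ being a right ideal of $A$. Modularity of $I_e$ in the ring $A(e)$ is witnessed by the same $u$: for $a \in A(e)$ the congruence $a \equiv u\alpha a \pmod{I}$ combined with $u\alpha a \in A(e)$ gives $a - u\alpha a \in I \cap A(e) = I_e$. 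For maximality of $I_e$ I argue contrapositively: a right ideal $T$ of $A(e)$ with $I_e \subsetneq T$ contains some $t \notin I$, so the $A$-right ideal generated by $I \cup \{t\}$ strictly contains $I$ and equals $A$ by maximality of $I$; projecting the resulting decomposition of $u$ onto grade $e$ (only grade-$e$ summands survive since $u \in A(e)$) and using modularity to absorb $A$-elements back into $A(e)$, every $a \in A(e)$ is expressed as a sum of elements of $I_e$ and $T$, forcing $T = A(e)$.

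For $\Psi$, take a right modular maximal ideal $S$ of $A(e)$ with $\alpha$-left identity $v \in A(e)$. To see $\hat S$ is a right ideal of $A$: if $x, y \in \hat S$ are addible then $x\gamma a$ and $y\gamma a$ share the same grade, hence are jointly inside or outside $A(e)$, so $(x-y)\gamma a \in A(e)$ implies $(x-y)\gamma a = x\gamma a - y\gamma a \in S$; closure under right $GA$-multiplication is immediate from $(x\gamma a)\beta b = x\gamma(a\beta b)$. I then argue that $v$ is an $\alpha$-left identity modulo $\hat S$: for $a \in A$ and any $\gamma, c$ with $(a - v\alpha a)\gamma c \in A(e)$, associativity rewrites the product as $a\gamma c - v\alpha(a\gamma c)$, which lies in $S$ by modularity of $S$ applied to $a\gamma c \in A(e)$. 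Maximality of $\hat S$ follows because any proper $A$-extension would pull back under $(\cdot)_e$ to a proper $A(e)$-extension of $S$.

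Finally, mutual inversion: $I \subseteq \widehat{I_e}$ is immediate since $xGA \subseteq I$ for $x \in I$. For $\widehat{I_e} \subseteq I$, suppose $x \notin I$; then $I + |x\rangle = A$ by maximality, so $u = i + nx + \sum x\gamma_j c_j$, and homogeneity forces every nonzero term to have grade $e$. Hence if $xGA \cap A(e) \subseteq I_e$, each surviving $x\gamma_j c_j$ lies in $I_e$; the term $nx$ either vanishes (when $\delta(x) \neq e$) or, via applying the same decomposition to the congruence $u\alpha x \equiv x \pmod{I}$ and using associativity to rewrite $(x\gamma_j c_j)\alpha x = x\gamma_j(c_j\alpha x) \in xGA$, contributes only elements of $I_e$. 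Either way $u \in I$, contradicting Remark~\ref{rem1}. Dually, $(\hat S)_e \subseteq S$ is built into the definition, while $S \subseteq (\hat S)_e$ requires that $x \in S$ forces $xGA \cap A(e) \subseteq S$, which I obtain by applying $v\alpha(x\gamma c) - x\gamma c \in S$ and rewriting $v\alpha(x\gamma c) = (v\alpha x)\gamma c$ to reduce to $\alpha$-multiplication inside the ring $A(e)$. I expect the main obstacle to be exactly this last reduction, as well as the parallel absorption step used in the maximality argument: a generic ternary product $x\gamma c$ that happens to land in $A(e)$ must be absorbed into an ideal closed a priori only under $\alpha$-multiplication by $A(e)$. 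This is where the regularity hypothesis on $A$ should play the decisive role, pinning down the grades $d(\gamma)$ and $\delta(c)$ in any such product (via comparison with a reference $\alpha$-product of grade $e$) and enabling the translation between the anneid and the ring $A(e)$.
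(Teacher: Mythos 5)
Your strategy is essentially the paper's: both directions rest on the decomposition $u=s+nx+x\beta a$ obtained from maximality, the trick of right-multiplying that decomposition by $\alpha x$ so every term is pushed into $xGA\cap A(e)$, and regularity used to pin down the grades $d(\gamma)$ and $\delta(c)$ of any product $x\gamma c$ that lands in $A(e)$. Your mutual-inversion arguments ($I=\widehat{I_e}$ and $\hat{S}\cap A(e)=S$) match the paper's. Two points in your write-up are genuinely incomplete, though, and in both the paper supplies the missing step. First, your verification that $v$ is an $\alpha$-left identity modulo $\hat{S}$ computes with $(a-v\alpha a)\gamma c$ for arbitrary $a\in A$, which presupposes $a\#v\alpha a$; that addibility is precisely what must be proved, since when it fails the congruence can only mean that both $a$ and $v\alpha a$ lie in $\hat{S}$. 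The paper splits into cases: for $a\in\hat{S}$ it shows directly that $u\alpha a\in\hat{S}$, and for $a\notin\hat{S}$ it produces $b,\beta$ with $a\beta b\in A(e)\setminus S$, deduces $a\beta b\#u\alpha(a\beta b)$ from modularity of $S$, and only then uses regularity to conclude $a\#u\alpha a$, after which your computation goes through. Second, the absorption step you flag is real but regularity alone does not finish it: regularity forces $d(\gamma)$ to be addible with $d(\alpha)$ and $\delta(c)=e$, not $\gamma=\alpha$, so one must read ``right ideal of $A(e)$'' as closure under multiplication by the whole addibility group $G(\alpha)$ rather than by $\alpha$ alone (this is how the paper implicitly uses it, e.g.\ when concluding $s\beta a\in S$). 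Even granting that, your head-on proof that $I_e$ is maximal in $A(e)$ is the hardest route; the paper instead proves that $\hat{S}$ is modular maximal in $A$ with $\hat{S}\cap A(e)=S$ and that $I=\widehat{I_e}$, from which maximality of $I_e$ follows by enlarging a hypothetical proper right ideal $T\supsetneq I_e$ (modular via the same $u$) to a modular maximal $T'$ and comparing $\widehat{T'}\supseteq\widehat{I_e}=I$, which forces $T'=I_e$.
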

\begin{proof}
We follow the proof of the corresponding theorem for anneids from
\cite{ha}. Let $S$ be a right modular maximal ideal of $A(e)$ and
let $u$ be an $\alpha$-left identity modulo $S.$ It is clear that
$\hat{S}$ is a right ideal of $A.$ Let $s\in S,$ $a\in A,$
$\beta\in G,$ and assume that $0\neq s\beta a\in A(e).$ Then
\[ed(\alpha)e=e=\delta(s\beta a)=\delta(s)d(\beta)\delta(a).\]
Since $\delta(s)=e,$ and since $A$ is regular, it follows that
$\delta(a)=e.$ Hence, $s\beta a\in S,$ which means that
$S\subseteq \hat{S}\cap A(e).$ Conversely, let $x\in\hat{S}\cap
A(e)$ be such that $x\notin S.$ Since $S$ is maximal, we have
$A(e)=S+|x\rangle,$ and so $u=s+nx+x\beta a,$ where $s\in S,$
$n\in\mathbb{Z},$ $\beta\in G,$ $a\in A.$ Since $x\alpha a,$
$x\alpha x\in xGA\cap A(e)\subseteq S$ and $u\alpha x=s\alpha
x+nx\alpha x+x\beta a\alpha x,$ we have $u\alpha x\in S,$ and
since $u$ is an $\alpha$-identity modulo $S,$ $x\in S.$ Thus,
$\hat{S}\cap A(e)\subseteq S.$ We have proved that $\hat{S}\cap A(e)=S.$\\
Next we want to prove that $u$ is an $\alpha$-left identity modulo
$\hat{S}.$ Let $a\in A$ be such that $a\in\hat{S}.$ It suffices to
show that $u\alpha a\in\hat{S}.$ Let $b\in A$ and $\beta\in G$
such that $0\neq(u\alpha a)\beta b\in A(e).$ Since $A$ is regular,
$a\beta b\in A(e),$ and since $a\in\hat{S},$ we have $a\beta b\in
S$ which implies $u\alpha(a\beta b)\in S.$ Assume now that
$a\notin\hat{S}.$ Then there exist $b\in A$ and $\beta\in G$ such
that $a\beta b$ belongs to $A(e)$ but not to $S.$ Since $a\beta
b-u\alpha(a\beta b)\in S,$ $u\alpha(a\beta b)\notin S$ and in
particular, $u\alpha(a\beta b)\neq 0.$ Since $a\beta
b\#u\alpha(a\beta b),$ by regularity of $A$ we have $a\#u\alpha
a.$ If $x\in A$ and $\gamma\in G$ are elements such that
$0\neq(a-u\alpha a)\gamma x\in A(e),$ $a\gamma x\in A(e),$ hence
we have $(a-u\alpha a)\gamma x\in S$ which implies $a-u\alpha
a\in\hat{S}.$\\
Now, let $B$ be a proper right ideal which contains $\hat{S}.$
Then $B\cap A(e)\supseteq\hat{S}\cap A(e)=S.$ Since $B$ is proper
and $u$ an $\alpha$-left identity modulo $\hat{S},$ $A(e)$ is not
a subset of $B.$ Hence, $B\cap A(e)=S.$ Now it is easy to verify
that $B=\hat{S}.$\\
Let $I$ be a right modular maximal ideal of $A$ of degree $e$ and
let $u$ be an $\alpha$-left identity modulo $I.$ It is easy to
verify that $I_e=I\cap A(e)$ is a right ideal of $A(e),$ $u$ an
$\alpha$-left identity modulo $I_e,$ and that $I=\hat{I_e}.$
\end{proof}
\noindent As in the case of anneids from \cite{ha1,ha}, we have
the following result.
\begin{theorem}\label{j1}
The Jacobson radical of a regular $G$-anneid $A$ consists of
elements $x\in A$ such that $xGA\cap A(e)\subseteq J(A(e)),$ for
every $\alpha$-idempotent $e\in\Delta.$
\end{theorem}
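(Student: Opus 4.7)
The plan is to chain together two results from the preceding sections and then transport the classical ring-theoretic description of the Jacobson radical through the correspondence of Theorem~\ref{theorem1}. Since $A$ is regular (hence right regular), the closing theorem of Section~5 gives
\[
J(A)=\bigcap_{I} I,
\]
where $I$ ranges over all right modular maximal ideals of $A$. By the lemma opening this section together with Remark~\ref{rem1}, every proper right modular ideal $I$ has a well-defined grade $e \in \Delta^{*}$, which is automatically an $\alpha$-idempotent for the $\alpha$ attached to any left identity modulo $I$. I would therefore reorganise the intersection by grade:
\[
J(A)=\bigcap_{e}\;\bigcap_{I \text{ of grade } e} I,
\]
with the outer index ranging over $\alpha$-idempotents of $\Delta^{*}$ (as $\alpha$ varies in $G^{*}$).

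The second step is to invoke Theorem~\ref{theorem1}. For each fixed $\alpha$-idempotent $e$, the maps $I\mapsto I\cap A(e)$ and $S\mapsto\hat S$ provide a bijection between right modular maximal ideals of $A$ of grade $e$ and right modular maximal ideals of the ring $A(e)$ (with product $x\cdot y:=x\alpha y$). Substituting $I=\hat S$ and pushing the intersection inside the set-builder defining $\hat S$ yields
\[
\bigcap_{I\text{ of grade } e} I=\bigcap_{S} \hat S=\Bigl\{\,x\in A:xGA\cap A(e)\subseteq \bigcap_{S}S\,\Bigr\}.
\]
The classical description of the Jacobson radical of a (not necessarily unital) ring now identifies $\bigcap_{S}S$ with $J(A(e))$. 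Intersecting over all $\alpha$-idempotents $e$ gives exactly the announced elementwise characterisation.

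Two minor points need care. First, commuting the intersection with the set-builder is immediate in one direction and, in the other, uses only that $xGA\cap A(e)\subseteq S$ for every $S$ forces containment in their intersection. Second, and this is the only real bookkeeping obstacle, one must reconcile the outer range in the grade-grouping step with the universally quantified "for every $\alpha$-idempotent $e$" in the statement: if $e$ is an $\alpha$-idempotent for which no right modular maximal ideal of $A$ has grade $e$, then $A(e)$ has no right modular maximal ideals either (by the bijection), so $J(A(e))=A(e)$ by the empty-intersection convention and the condition $xGA\cap A(e)\subseteq J(A(e))$ is vacuous. Hence including or excluding such grades does not change the set described. Apart from this verification, no new ring-theoretic argument beyond what is already recorded in Theorem~\ref{theorem1} is required.
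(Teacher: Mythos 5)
Your proposal is correct and follows essentially the same route as the paper: express $J(A)$ as the intersection of right modular maximal ideals (using right regularity), group these ideals by their grade $e$, and transport each grade-$e$ family through the bijection $S\mapsto\hat S$ of Theorem~\ref{theorem1} to identify the resulting intersection with $\{x\in A : xGA\cap A(e)\subseteq J(A(e))\}$. The paper compresses this into one sentence; your added care about commuting the intersection with the set-builder and about idempotents carrying no modular maximal ideals is sound but not a different argument.
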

\begin{proof}
According to the previous theorem, $x\in A$ is in the intersection
of all right modular maximal ideals of $A$ with degree $e$ if and
only if $x\in\hat{S}$ for every right modular maximal ideal $S$ of
$A(e),$ that is, if $xGA\cap A(e)\subseteq S\subseteq J(A(e)).$
\end{proof}
\noindent In \cite{ha}, the notion of a \emph{quasi-regular
element of an anneid} is introduced, and here we do similar for
gamma anneids.
\begin{definition}
An element $z$ of a $G$-anneid $A$ is called $\alpha$-\emph{right
quasi-regular} or shortly $\alpha$-\emph{rqr} if there exists no
proper right ideal of $A$ such that $z$ is an $\alpha$-left
identity modulo that ideal. If $z$ is $\alpha$-right quasi-regular
for all $\alpha\in G,$ then $z$ is called \emph{right
quasi-regular}. It is clear how to define a \emph{left
quasi-regular element}. An element is called \emph{quasi-regular}
if it is both left and right quasi-regular. A right ideal of $A$
is called \emph{quasi-regular} if all its elements are right
quasi-regular. We similarly define left and two-sided
quasi-regular ideals.
\end{definition}
\begin{proposition}\label{regular}
Let $A$ be a regular $G$-anneid. An element $z\in A$ is
$\alpha$-rqr (rqr) if and only if one of the following is
satisfied:
\begin{itemize}
    \item[$i)$] $\delta(z)$ is not an $\alpha$-idempotent of
    $\Delta^*$ (for all $\alpha\in G$);
    \item[$ii)$] $e=\delta(z)$ is an $\alpha$-idempotent of $\Delta^*$ (for all $\alpha\in G$) and
    $z$ is an $\alpha$-right quasi-regular element (right quasi-regular element) of an $G(\alpha)$-ring
    $A(e)$ ($\Gamma$-ring $A(e)$).
\end{itemize}
\end{proposition}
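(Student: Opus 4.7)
The strategy is to prove the contrapositive formulation: $z$ fails to be $\alpha$-rqr if and only if $e=\delta(z)$ is an $\alpha$-idempotent and $z$ fails to be $\alpha$-rqr in the $\{\alpha\}$-ring $A(e)$. The case distinction in the statement then follows, because the assertion ``$\delta(z)$ is not an $\alpha$-idempotent'' together with Remark \ref{rem1} immediately forces $z$ to be $\alpha$-rqr. The $\Gamma$-ring version stated parenthetically is obtained by quantifying the argument over all $\alpha\in G$. Throughout, the two main tools are Remark \ref{rem1} (grade of a left identity is an $\alpha$-idempotent) and the $I\mapsto I\cap A(e)$, $S\mapsto \hat{S}$ correspondence built in the proof of Theorem \ref{theorem1}, which I will reuse for ideals that are merely proper rather than maximal.

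For the forward direction, suppose there is a proper right ideal $I$ of $A$ for which $z$ is an $\alpha$-left identity. Remark \ref{rem1} yields $\delta(z)=e$ with $e\,d(\alpha)\,e=e$, and $z\notin I$. Set $I_{e}=I\cap A(e)$. Using that $e$ is an $\alpha$-idempotent, one checks directly that $I_{e}$ is a right ideal of the $\{\alpha\}$-ring $A(e)$, that it is proper (since $z\in A(e)\setminus I_{e}$), and that $z$ is an $\alpha$-left identity modulo $I_{e}$ in $A(e)$: for $y\in A(e)$, the products $y$ and $z\alpha y$ both live in $A(e)$, so the defining congruence modulo $I$ descends to one modulo $I_{e}$. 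Hence $z$ is not $\alpha$-rqr in $A(e)$.

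For the converse, suppose $e=\delta(z)$ is an $\alpha$-idempotent and $z$ is an $\alpha$-left identity modulo a proper right ideal $S$ of $A(e)$. Following Theorem \ref{theorem1}, set $\hat S=\{x\in A\mid xGA\cap A(e)\subseteq S\}$; it is a right ideal of $A$. Properness of $\hat S$ is the first place regularity is not yet needed but Remark \ref{rem1} is: if $\hat S=A$, then $z\in\hat S$ forces $z\alpha z\in zGA\cap A(e)\subseteq S$, and combining with $z-z\alpha z\in S$ gives $z\in S$, contradicting Remark \ref{rem1}. What remains is to show $z$ is an $\alpha$-left identity modulo $\hat S$; this step is the main technical point, and it is where the regularity of $A$ is essential.

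So, for $a\in A$, I will distinguish two cases. If $a\in\hat S$, take any $\beta\in G$, $b\in A$ with $0\neq (z\alpha a)\beta b\in A(e)$ and use the identity $(z\alpha a)\beta b=z\alpha(a\beta b)\#z\alpha z$; regularity of $A$ then forces $a\beta b\in A(e)$, hence $a\beta b\in aGA\cap A(e)\subseteq S$, and the $\alpha$-left identity property of $z$ modulo $S$ places $z\alpha(a\beta b)$ in $S$, showing $z\alpha a\in\hat S$. If $a\notin\hat S$, pick $\beta,b$ with $a\beta b\in A(e)\setminus S$; then $a\beta b\#z\alpha(a\beta b)$ with both nonzero, so regularity gives $a\#z\alpha a$. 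To prove $a-z\alpha a\in\hat S$, take $0\neq(a-z\alpha a)\gamma x\in A(e)$, argue $a\gamma x\neq 0$ and lies in $A(e)$, and observe that whether $a\gamma x$ is in $S$ or not, the $\alpha$-left identity property of $z$ mod $S$ yields $a\gamma x-z\alpha(a\gamma x)=(a-z\alpha a)\gamma x\in S$. The hardest step is this last computation: keeping track of when addibilities hold and repeatedly invoking regularity to transport grade information from $z\alpha(\cdot)$ back to $(\cdot)$ is where care is required, but no new ideas beyond those already encapsulated in Theorem \ref{theorem1} are needed.
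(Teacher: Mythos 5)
Your proposal is correct and follows essentially the same route as the paper: the paper disposes of case $i)$ by Remark \ref{rem1} and reduces case $ii)$ to the $I\mapsto I\cap A(e)$, $S\mapsto\hat S$ correspondence of Theorem \ref{theorem1}, which is exactly the machinery you deploy. The only (harmless) difference is that you rerun that construction directly for arbitrary proper right ideals, whereas the paper implicitly reduces to modular maximal ideals (every modular right ideal lies in a maximal one) so that Theorem \ref{theorem1} applies verbatim.
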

\begin{proof}
If $i)$ holds, then $z$ is $\alpha$-rqr according to Remark
\ref{rem1}. If $e=\delta(z)$ is an $\alpha$-idempotent of
$\Delta^*,$ it is enough to prove that then $z$ is $\alpha$-rqr in
$A$ if and only if it is $\alpha$-rqr in $A(e),$ but this follows
from Theorem \ref{theorem1}.
\end{proof}
\noindent By classical means one may prove the following
corollary, see \cite{jac}.
\begin{cor}
If $I$ is a right quasi-regular ideal of a $G$-anneid $A,$ then
every element $z\in I$ is quasi-regular.
\end{cor}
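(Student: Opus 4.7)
The plan is to reduce the statement to the classical ring-theoretic fact that in an associative ring, every right quasi-regular ideal is two-sided quasi-regular, by passing to the addibility subrings $A(e)$ for $\alpha$-idempotents $e$. Since $z\in I$ is right quasi-regular by hypothesis, it suffices to verify that $z$ is $\alpha$-left quasi-regular for every $\alpha\in G$, and then invoke the left analogue of the argument.

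Fix $\alpha\in G$ and $z\in I$. By the left counterpart of Proposition \ref{regular}, if $\delta(z)$ is not an $\alpha$-idempotent of $\Delta^*$, then $z$ is automatically $\alpha$-left quasi-regular and we are done. Otherwise, set $e=\delta(z)$, so that $z\in A(e)$ and, as noted in the discussion preceding Theorem \ref{theorem1}, $A(e)$ is an $\{\alpha\}$-ring under the composition $x\cdot y:=x\alpha y$. Let $I_e:=I\cap A(e)$. The first technical step is to check that $I_e$ is a two-sided ideal of the ring $A(e)$: closure under subtraction of addible elements follows from $I$ being a submoduloid, and $I_e\cdot A(e)\subseteq I_e$, $A(e)\cdot I_e\subseteq I_e$ follow because $I$ is a two-sided ideal of $A$ and $A(e)$ is closed under $\alpha$-multiplication.

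The second step is to transfer quasi-regularity between $A$ and $A(e)$. By case $ii)$ of Proposition \ref{regular}, an element $w\in A(e)$ is $\alpha$-right quasi-regular in $A$ if and only if it is right quasi-regular in the ring $A(e)$; consequently, every element of $I_e$ is right quasi-regular in $A(e)$, so $I_e$ is a right quasi-regular ideal of the associative ring $A(e)$ in the classical sense. Applying the classical result (e.g.\ from \cite{jac}) that such an ideal is two-sided quasi-regular, we conclude that $z$ is left quasi-regular in $A(e)$. Using the left analogue of Proposition \ref{regular} once more lifts this back to $z$ being $\alpha$-left quasi-regular in $A$. Since $\alpha$ was arbitrary, $z$ is left quasi-regular, and together with the hypothesis this gives quasi-regularity.

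The main obstacle is purely bookkeeping: one must confirm that the notions of ideal, modular ideal, and right quasi-regularity in the $\{\alpha\}$-ring $A(e)$ really coincide with the restrictions of the corresponding notions in the $G$-anneid $A$ to the homogeneous stratum of grade $e$. This is essentially what Theorem \ref{theorem1} together with the regularity of $A$ accomplishes, so the work is in stringing those correspondences together rather than in any new computation; no genuinely new argument beyond the classical ring-theoretic proof is needed.
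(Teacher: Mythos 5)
Your reduction is sound and is exactly the route the paper gestures at: the paper offers no proof here beyond the remark that the corollary follows ``by classical means,'' and the intended mechanism is precisely the one you use, namely Proposition \ref{regular} (together with Remark \ref{rem1}) to dispose of the non-idempotent grades and to shuttle quasi-regularity between $A$ and the ring $A(e)$, where Jacobson's classical lemma applies. Two points need tightening. First, your ``first technical step'' overclaims: under the paper's definition (``a right ideal of $A$ is called quasi-regular if all its elements are right quasi-regular''), the hypothesis gives you only a \emph{right} ideal $I$, so the inclusion $A(e)\cdot I_e\subseteq I_e$ is not available and $I_e$ need not be two-sided in $A(e)$. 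Fortunately this costs you nothing: the classical lemma from \cite{jac} is stated for a quasi-regular \emph{right} ideal (if $z\circ z'=0$ then $z'=z\alpha z'-z\in I_e$ because $I_e$ is a right ideal, $z'$ has a right quasi-inverse $z''$, and associativity of $\circ$ forces $z''=z$, so $z'\circ z=0$), so you should simply delete the two-sidedness claim rather than rely on it. Second, you lean on the left analogues of Theorem \ref{theorem1} and Proposition \ref{regular}, which the paper never states; they do hold by the mirror argument because the standing hypothesis of this section is that $A$ is \emph{regular}, i.e.\ both left and right regular, but this dependence on regularity (which the corollary's wording suppresses) and on the unproved left-sided versions deserves an explicit sentence. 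With those two adjustments the argument is complete.
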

\begin{proposition}
The Jacobson radical of a regular $G$-anneid $A$ is a
quasi-regular ideal which contains all right quasi-regular ideals
of $A.$
\end{proposition}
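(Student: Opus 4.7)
The strategy has two parts: first show $J(A)$ is right quasi-regular, then invoke the corollary just above (elements of a right quasi-regular ideal are quasi-regular) to upgrade this to full quasi-regularity. For the containment statement, I would argue contrapositively: if a right quasi-regular ideal escaped some right modular maximal ideal, it would produce a bad $\alpha$-left identity inside itself. Both halves rely on the theorem that in a regular $G$-anneid $J(A)$ equals the intersection of its right modular maximal ideals, together with Remark \ref{rem1} (an $\alpha$-left identity that lies inside its own modular ideal forces the ideal to be improper) and Proposition \ref{regular}.

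For the first part, take $z\in J(A)$ and $\alpha\in G$, and suppose toward contradiction that $z$ fails to be $\alpha$-right quasi-regular. Then there is a proper right ideal $I$ of $A$ with $z$ as an $\alpha$-left identity modulo $I$. A standard Zorn argument, applied to the family of proper right ideals of $A$ containing $I$, yields a maximal member $M$; because $a-z\alpha a\in I\subseteq M$ for every $a\in A$, the ideal $M$ inherits modularity with the same $\alpha$-left identity $z$. Since $J(A)$ equals the intersection of all right modular maximal ideals, $z\in M$, but then Remark \ref{rem1} forces $M=A$, contradicting properness. Hence $z$ is $\alpha$-rqr for every $\alpha$, so $z$ is rqr; since this applies to every element of $J(A)$, $J(A)$ is a right quasi-regular ideal, and the preceding corollary promotes this to quasi-regularity.

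For the second part, let $I$ be a right quasi-regular ideal and suppose $I\not\subseteq J(A)$. Then there exists a right modular maximal ideal $M$ of $A$ with $I\not\subseteq M$, and by maximality $I+M=A$. Choose an $\alpha$-left identity $u\in A$ modulo $M$ and write $u=i+m$ with $i\in I$ and $m\in M$ (these are homogeneous elements that must be addible, which is automatic since their sum $u$ lies in the homogeneous part). Then for every $a\in A$,
\begin{equation*}
a-i\alpha a \;=\; (a-u\alpha a)+(u\alpha a - i\alpha a) \;=\; (a-u\alpha a)+m\alpha a \;\in\; M,
\end{equation*}
so $i$ is itself an $\alpha$-left identity modulo the proper ideal $M$. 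This says $i$ is not $\alpha$-rqr, contradicting $i\in I$ and the right quasi-regularity of $I$.

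The main obstacle I anticipate is verifying that the Zorn extension $M\supseteq I$ is still modular with the \emph{same} $\alpha$-left identity, and handling the addibility bookkeeping when decomposing $u=i+m$ in the homogeneous setting (one must check that $i\alpha a\#m\alpha a$ so that the distributive identity above is legitimate, which follows from $i\# m$ together with the compatibility clauses in the definition of a $G$-anneid). Everything else reduces to the standard ring-theoretic manipulations of modular ideals, with grades playing only a bookkeeping role through Proposition \ref{regular} and Remark \ref{rem1}.
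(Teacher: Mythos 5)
Your argument is correct. The first half (quasi-regularity of $J(A)$) is essentially the paper's own proof: the paper also takes $z\in J(A)$ with an alleged $\alpha$-left identity role modulo a proper right ideal $I$, passes to a maximal modular $I$ (compressing your Zorn step into ``without loss of generality''), and derives $z\in I$, hence $I=A$ by Remark \ref{rem1}; the upgrade to two-sided quasi-regularity via the preceding corollary is likewise implicit there. For the containment half, however, you take a genuinely different route. The paper argues module-theoretically, directly from the definition of $J(A)$ as the intersection of annihilators of regular irreducible $AG$-moduloids: given $z$ in a right quasi-regular ideal and $x\gamma z\neq 0$, it writes $x=(x\gamma z)\beta a$, shows by regularity that $e=\delta(z\beta a)$ is a $\gamma$-idempotent, and uses the existence of a quasi-inverse $t$ of $z\beta a$ in the $\Gamma$-ring $A(e)$ (citing Ravisankar--Shukla) to force $x=0$. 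Your proof instead stays entirely at the ideal level: if $I\not\subseteq M$ for some right modular maximal ideal $M$, then $I+M=A$ lets you split the left identity $u=i+m$ and transfer the left-identity property to $i\in I$, contradicting its $\alpha$-right quasi-regularity. Your version is more elementary and avoids both the local rings $A(e)$ and the external citation, at the cost of leaning on the earlier theorem that $J(A)$ is the intersection of the right modular maximal ideals (which holds here since $A$ is regular) and of the addibility bookkeeping you flag; that bookkeeping does close, since in the nontrivial case $a-u\alpha a$, $m\alpha a$ and $i\alpha a$ all land in the addibility group $A(\delta(a))$, where the displayed rearrangement is an honest group computation. One small point worth spelling out is the case split built into ``congruent modulo $M$'' (either both of $a$, $u\alpha a$ lie in $M$, or they are addible with difference in $M$), but in each case your conclusion $a\equiv i\alpha a \pmod{M}$ goes through.
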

\begin{proof}
We follow the proof of the corresponding proposition for anneids
given in \cite{ha}. Let $z\in J(A)$ and let us assume that $z$ is
an $\alpha$-left identity modulo $I,$ where $I$ is a proper right
ideal of $A.$ Without loss of generality we may assume that $I$ is
maximal. Since $J(A)$ is the intersection of all of its right
modular maximal ideals, it follows that $z\in I,$ which
would mean that $I$ is not a proper ideal.\\
Suppose now that $I$ is a right quasi-regular ideal of $A$ and
$z\in I.$ Also, let $M$ be a regular irreducible $AG$-moduloid and
$z\notin(0:M).$ Then there exist elements $x\in M$ and $\gamma\in
G$ such that $x\gamma z\neq0.$ Hence, $x\gamma z$ is a strict
generator of $M$ and so there exist $a\in A$ and $\beta\in G$ such
that $x=(x\gamma z)\beta a.$ Thus, $x\gamma (z\beta a)=x\gamma
z\beta a\gamma z\beta a.$ By regularity we have that
$e=\delta(z\beta a)$ is a $\gamma$-idempotent of $\Delta^*.$
Hence, $z\beta a$ is a $\gamma$-right quasi-regular in $A(e).$ By
\cite{rs}, we know that there exists $t\in A(e)$ such that $z\beta
a+t-(z\beta a)\gamma t=0,$ which implies $0=x-x\gamma (z\beta
a)-(x-x\gamma (z\beta a))\gamma t=x-x\gamma(z\beta a+t-(z\beta
a)\gamma t)=x.$ Hence, $z\in J(A).$
\end{proof}
\begin{cor}
The Jacobson radical and the left Jacobson radical of a regular
$G$-anneid are equal.
\end{cor}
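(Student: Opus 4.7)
The plan is to obtain the equality by a two-way inclusion, each direction coming from the fact that $J(A)$ (respectively the left Jacobson radical $J_\ell(A)$) is a two-sided quasi-regular ideal that contains all one-sided quasi-regular ideals of the appropriate handedness. Concretely, the preceding proposition shows that $J(A)$ is a quasi-regular ideal (in particular \emph{right} quasi-regular) and contains every right quasi-regular ideal of $A$, while the corollary immediately before tells us that in a right quasi-regular ideal every element is in fact quasi-regular in both senses, so $J(A)$ is actually left quasi-regular as well.

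The core of the argument is then a symmetry observation: since a regular $G$-anneid is by definition both left and right regular, the entire machinery leading to the preceding proposition (irreducible regular moduloids, modular maximal ideals, the grade-reduction of Theorem \ref{theorem1}, Proposition \ref{regular}) can be repeated verbatim on the left, yielding a left analogue of the preceding proposition: the left Jacobson radical $J_\ell(A)$ is a (two-sided) quasi-regular ideal that contains every left quasi-regular ideal of $A$. I would not rewrite this proof; I would just invoke left-right duality, which is legitimate here because all the hypotheses used (regularity of $A$, the definition of quasi-regularity, the description in Proposition \ref{regular}, and the correspondence of Theorem \ref{theorem1}) are phrased symmetrically in the two sides.

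With these two facts in hand, the inclusions are immediate. On the one side, $J(A)$ is a left quasi-regular ideal (by the corollary applied to its right quasi-regularity), and $J_\ell(A)$ contains every left quasi-regular ideal, so $J(A)\subseteq J_\ell(A)$. On the other side, $J_\ell(A)$ is right quasi-regular by the dual application of the corollary, and $J(A)$ contains every right quasi-regular ideal by the preceding proposition, so $J_\ell(A)\subseteq J(A)$. Combining yields $J(A)=J_\ell(A)$.

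The only subtle point, and thus the step that most deserves attention in the write-up, is the invocation of left-right symmetry: one should at least remark that the definition of a regular $G$-anneid is symmetric, that $AG$-moduloids on the left are defined analogously, and that the whole chain of results of the section admits a mirror version. Everything else is a short combinatorial deduction from the two quasi-regularity statements and the corollary on quasi-regularity of elements in a one-sided quasi-regular ideal.
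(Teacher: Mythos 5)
Your argument is correct and is precisely the intended derivation: the paper states this as an immediate corollary of the preceding proposition (together with the corollary that elements of a right quasi-regular ideal are quasi-regular) and its left-right dual, which is exactly the route you take. The one point you rightly flag --- that the left analogue of the whole chain of results must be available, which it is because regularity and the definitions are two-sided symmetric --- is the only content beyond the formal double inclusion.
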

\begin{cor}\label{ji}
The Jacobson radical of a $G$-anneid $A$ consists of all elements
$x\in A$ such that $x\alpha a$ is right quasi-regular element for
all $\alpha\in G,$ $a\in A.$
\end{cor}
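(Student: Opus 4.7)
The plan is to prove the two inclusions separately, with the forward one being immediate from the preceding Proposition and the backward one requiring Theorem \ref{j1} as a reduction to the component anneids.

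For the forward inclusion, suppose $x\in J(A)$. The preceding Proposition asserts that $J(A)$ is both an ideal of $A$ and a quasi-regular ideal. The first property gives $x\alpha a\in J(A)$ for every $\alpha\in G$ and $a\in A$; the second then forces every such $x\alpha a$ to be right quasi-regular, which is exactly what the ``only if'' direction requires.

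For the backward inclusion, suppose $x\alpha a$ is right quasi-regular for every $\alpha\in G$ and $a\in A$. Theorem \ref{j1} reduces the task of showing $x\in J(A)$ to verifying $xGA\cap A(e)\subseteq J(A(e))$ for every $\alpha$-idempotent $e\in\Delta^*$. I would fix such an $e$ and take a typical element $y=x\beta b\in xGA\cap A(e)$. For any $\gamma\in G$ and $c\in A$, the $\Gamma$-ring associativity combined with the closure $GAG\subseteq G$ yields $y\gamma c=x(\beta b\gamma)c$ with $\beta b\gamma\in G$, so the hypothesis makes $y\gamma c$ right quasi-regular in $A$; restricting to those $\gamma,c$ that keep $y\gamma c$ inside $A(e)$, Proposition \ref{regular}(ii) transports the right quasi-regularity across into the subring $A(e)$ itself. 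The classical characterization of the Jacobson radical of a ring (or $\Gamma$-ring, cf.\ \cite{rs} and \cite{cl}) applied to $A(e)$ then yields $y\in J(A(e))$, and Theorem \ref{j1} delivers $x\in J(A)$.

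The main obstacle is precisely the passage between right quasi-regularity in the ambient anneid $A$ and right quasi-regularity inside the smaller ring $A(e)$; this is exactly what Proposition \ref{regular}(ii) accomplishes once the grade is an $\alpha$-idempotent. A secondary technical matter is the appeal to the classical description of $J$ for the component ring $A(e)$, but this is a standard result of ring/$\Gamma$-ring theory rather than something that needs to be developed from scratch here.
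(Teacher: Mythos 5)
Your proof is correct, but it travels a different road than the paper does. The paper gives no explicit proof because the statement is meant to fall straight out of the preceding Proposition: the forward inclusion because $J(A)$ is a quasi-regular right ideal (exactly as you argue), and the backward inclusion because the second half of that Proposition's proof is already purely elementwise --- it only uses that the products $z\beta a$ are right quasi-regular in order to show that $z$ annihilates every irreducible regular $AG$-moduloid, so the same computation applies verbatim to any $x$ with all $x\alpha a$ right quasi-regular. You instead bypass moduloids entirely: you reduce via Theorem \ref{j1} to the grade components $A(e)$, transfer right quasi-regularity from $A$ into the $\{\alpha\}$-ring $A(e)$ by Proposition \ref{regular}(ii), and then invoke the classical elementwise description of the Jacobson radical of a $\Gamma$-ring from \cite{cl}, \cite{rs}. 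Both routes are legitimate and of comparable length; the paper's keeps everything inside the moduloid framework it has just set up, while yours makes explicit the ``localize to the component rings'' philosophy that underlies Theorems \ref{theorem1} and \ref{j1} and delegates the ring-level work to the classical theory, which is consistent with how the paper itself uses \cite{rs} elsewhere. One small stylistic caution: rather than rewriting $y\gamma c$ as $x(\beta b\gamma)c$ (which presupposes the Nobusawa associativity and $GAG\subseteq G$), it is safer to group it as $x\beta(b\gamma c)$ with $b\gamma c\in A$ by $AGA\subseteq A$, so that the hypothesis applies in the plain $\Gamma$-ring setting as well; also note that once $y\in A(e)$ and $c\in A(e)$, the product $y\alpha c$ lands in $A(e)$ automatically (its grade is $e\,d(\alpha)\,e=e$ or it is zero), so no genuine ``restriction'' of $\gamma,c$ is needed.
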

\begin{proposition}\label{j2}
If $A$ is a regular $G$-anneid and $e$ a $\beta$-idempotent of
$\Delta^*,$ then $J(A(e))=J(A)\cap A(e).$
\end{proposition}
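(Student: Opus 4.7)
The plan is to prove the two inclusions $J(A)\cap A(e)\subseteq J(A(e))$ and $J(A(e))\subseteq J(A)\cap A(e)$ separately, using Theorem \ref{j1} and Corollary \ref{ji} as the primary tools, with regularity of $A$ and the Nobusawa associativity playing supporting roles.

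For $J(A)\cap A(e)\subseteq J(A(e))$, take $x\in J(A)\cap A(e)$. Theorem \ref{j1} specialized at the idempotent $e$ gives $xGA\cap A(e)\subseteq J(A(e))$. Writing $G_e=\{\gamma\in G:ed(\gamma)e=e\}$, every product $x\gamma a$ with $\gamma\in G_e$ and $a\in A(e)$ has grade $e$ or is zero, so lies in $xGA\cap A(e)\subseteq J(A(e))$. Since $J(A(e))$ is a quasi-regular ideal of the regular $G_e$-anneid $A(e)$ by the earlier Proposition establishing $J$ as quasi-regular, each such $x\gamma a$ is right quasi-regular in $A(e)$. Corollary \ref{ji} applied to $A(e)$ then yields $x\in J(A(e))$.

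For the reverse inclusion, take $x\in J(A(e))$; the containment $x\in A(e)$ is automatic, so it suffices to establish $x\in J(A)$. Theorem \ref{j1} reduces this to verifying $xGA\cap A(e')\subseteq J(A(e'))$ for every idempotent $e'\in\Delta^*$. Fix such an $e'$ and a nonzero $y=x\gamma a\in A(e')$. In the case $e'=e$, regularity of $A$ is the key. If $xG_eA(e)\neq\{0\}$, picking a nonzero $x\gamma_0 a_0$ with $\gamma_0\in G_e$ and $a_0\in A(e)$ produces an element with $y\#x\gamma_0 a_0$ in $A(e)$, and regularity forces $\gamma\in G_e$ and $a\in A(e)$, so $y\in xG_eA(e)\subseteq J(A(e))$ because $J(A(e))$ is an ideal. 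If instead $xG_eA(e)=\{0\}$, a direct computation with Nobusawa associativity $y\beta z=x(\gamma a\beta)z$, together with the observation that $\gamma a\beta\in G_e$ whenever $\beta\in G_e$ and $z\in A(e)$, yields $yG_eA(e)=\{0\}$, placing $y$ in $J(A(e))$ vacuously via Corollary \ref{ji}.

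The case $e'\neq e$ is the main obstacle. Again the aim is to show $y\in J(A(e'))$ through the Corollary \ref{ji} characterization, i.e., that every $y\gamma'b$ with $\gamma'\in G_{e'}$ and $b\in A(e')$ is right quasi-regular in $A(e')$. Nobusawa associativity rewrites $y\gamma'b=x(\gamma a\gamma')b$ with $\gamma a\gamma'\in G$, and the product has grade $e'$. The strategy is to exploit the existence of $\beta$-quasi-inverses for $x$ in $A(e)$: for each $\beta\in G_e$ there exists $t\in A(e)$ with $x+t-x\beta t=0$, furnished by the quasi-inverse argument cited in the proof of the Proposition establishing quasi-regularity of $J$. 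The plan is to transport this identity along right multiplication by $\gamma a\gamma'$ and by $b$, thereby building a candidate quasi-inverse for $y\gamma'b$ within $A(e')$, while using the regularity of $A$ and the grade constraints $ed(\gamma)\delta(a)=e'$ and $e'd(\gamma')e'=e'$ to guarantee that the candidate actually lies in $A(e')$ and satisfies the required identity. Carrying out this grade-tracking and quasi-inverse verification is the technically demanding heart of the proof.
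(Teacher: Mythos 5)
Your first inclusion is fine: from $x\in J(A)$, Theorem \ref{j1} gives $xGA\cap A(e)\subseteq J(A(e))$, and feeding the products $x\gamma a$ ($\gamma$ with $ed(\gamma)e=e$, $a\in A(e)$) into Corollary \ref{ji} for $A(e)$ yields $x\in J(A(e))$. The paper gets the same inclusion more directly, by noting that $J(A)\cap A(e)$ is a quasi-regular ideal of $A(e)$ (via Proposition \ref{regular}) and invoking the fact that the Jacobson radical contains every right quasi-regular ideal; either route is acceptable.

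The reverse inclusion, however, has a genuine gap. You correctly reduce, via Theorem \ref{j1}, to showing $xGA\cap A(e')\subseteq J(A(e'))$ for every idempotent $e'$, and you correctly identify $e'\neq e$ as the critical case --- but there you only announce a strategy (transporting a quasi-inverse $x+t-x\beta t=0$ from $A(e)$ along right multiplication by $\gamma a\gamma'$ and $b$) and explicitly defer ``the technically demanding heart of the proof.'' That step is not carried out, and it is doubtful it can be made to work as stated: a quasi-inverse of $x$ in $A(e)$ does not in any evident way produce a quasi-inverse of $x(\gamma a\gamma')b$ inside $A(e')$ when the grades differ. The point you are missing is that the case $e'\neq e$ is in fact \emph{vacuous}, and no quasi-inverse needs to be constructed. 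The paper argues by contradiction: if some $y=z\alpha x$ with $z\in J(A(e))$ had idempotent grade $f$ and $y\notin J(A(f))$, then, since any $b$ with $A(f)\Gamma b\Gamma A(f)=\{0\}$ generates a ``cube-zero'' ideal and hence lies in $J(A(f))$, there must exist $t\in A(f)$ and $\gamma\in G$ with $t\gamma y\neq0$; the grade identity
\[(fd(\gamma)e)d(\alpha)\delta(x)=fd(\gamma)\bigl(ed(\alpha)\delta(x)\bigr)=fd(\gamma)f=f=ed(\alpha)\delta(x)\neq0\]
together with regularity then forces $fd(\gamma)e=e$ and finally $f=e$, contradicting $f\neq e$. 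Without this annihilator argument (or an equivalent substitute) your proof of $J(A(e))\subseteq J(A)$ is incomplete.
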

\begin{proof}
We may follow the proof of the corresponding result for regular
anneids from \cite{ha}. It is clear that $J(A)\cap A(e)$ is a
quasi-regular ideal of $A(e),$ hence $J(A)\cap A(e)\subseteq
J(A(e)).$ Conversely, if $z\in J(A(e)),$ assume that $z\notin
J(A).$ Then there exist $x\in A,$ $\delta=\delta(x),$ and
$\alpha\in G$ such that $\delta(z\alpha x)$ is an idempotent $f$
of $\Delta^*$ and $z\alpha x\notin J(A(f)).$ Using the regularity,
it is easy to verify that $e,$ $\delta$ and $f$ are mutually
distinct. Like in the case of rings, if we observe a $\Gamma$-ring
$B$ and a set $I$ of elements $b\in B$ such that $B\Gamma b\Gamma
B=\{0\},$ then $I$ is an ideal of $B.$ Moreover, $I\Gamma I\Gamma
I=\{0\},$ and hence $I\subseteq J(B).$ In our case, $z\alpha
x\notin J(A(f)),$ and hence there exist $t\in A(f)$ and $\gamma\in
G$ such that $t\gamma z\alpha x\neq 0,$ which implies
$(fd(\gamma)e)d(\alpha)\delta=fd(\gamma)(ed(\alpha)\delta)=fd(\gamma)f=f=ed(\alpha)\delta\neq0,$
and then regularity implies $fd(\gamma)e=e=ed(\beta)e,$ hence
$f=e,$ again by regularity.
\end{proof}
\noindent The following proposition is an analogue of the similar
result for regular anneids from \cite{ha}.
\begin{proposition}
Let $A$ be a regular $G$-anneid and $I$ a right ideal of $A.$ Then
$J(I)=\{x\in I\ |\ xGI\subseteq J(A)\}.$
\end{proposition}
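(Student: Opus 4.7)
The plan is to reduce both sides of the claimed equality to a single elementwise criterion via the quasi-regularity description of the Jacobson radical (Corollary~\ref{ji}). First note that $I$, as a subanneid of the regular $G$-anneid $A$, automatically inherits regularity, so Corollary~\ref{ji} applies to $I$ as well as to $A$.

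My next step is to recast the set $\{x\in I : xGI\subseteq J(A)\}$ in terms of quasi-regularity. I claim $xGI\subseteq J(A)$ is equivalent to: $x\alpha y$ is right quasi-regular in $A$ for every $\alpha\in G$ and every $y\in I$. The forward direction uses that $J(A)$ is a quasi-regular ideal, so each $x\alpha y\in J(A)$ is rqr in $A$. The reverse direction uses Corollary~\ref{ji}: $x\alpha y\in J(A)$ iff $(x\alpha y)\gamma c = x\alpha(y\gamma c)$ is rqr in $A$ for all $\gamma\in G$, $c\in A$, and since $I$ is a right ideal, $y\gamma c\in I$, so this follows from the hypothesis. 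Simultaneously, applying Corollary~\ref{ji} to $I$ gives that $x\in J(I)$ is equivalent to $x\alpha y$ being rqr in $I$ for every $\alpha\in G$, $y\in I$. The theorem therefore reduces to the elementwise statement: for $z\in I$, $z$ is rqr in $A$ if and only if $z$ is rqr in $I$.

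To prove this elementwise equivalence, fix $\beta\in G$ and invoke Proposition~\ref{regular}. If $e:=\delta(z)$ is not a $\beta$-idempotent of $\Delta^*$, then $z$ is $\beta$-rqr in both $A$ and $I$ for trivial reasons. Otherwise $z\in I(e):=I\cap A(e)$, and one must compare $\beta$-rqr-ness in the rings $A(e)$ and $I(e)$. The key observation is that $I(e)$ is a right ideal of the $\{\beta\}$-ring $A(e)$: for $u\in I(e)$ and $v\in A(e)$ one has $u\beta v\in IGA\subseteq I$, while $\delta(u\beta v)=ed(\beta)e=e$, so $u\beta v\in I(e)$. Since the quasi-regularity equation $z+t-z\beta t=0$ can be rewritten as $t=z\beta t-z$, and both $-z$ and $z\beta t$ lie in $I(e)$ (the latter by the right ideal property), any $t\in A(e)$ solving the equation must itself lie in $I(e)$. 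Hence $\beta$-rqr-ness of $z$ in $A(e)$ and in $I(e)$ coincide.

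The main obstacle is precisely this transfer of quasi-regularity down to the right ideal $I(e)$, which is where the hypothesis that $I$ is a right ideal is essential; the manipulation $t=z\beta t-z$ handles it cleanly. With the elementwise equivalence established, both inclusions of the theorem are immediate from the reformulations in the first two paragraphs.
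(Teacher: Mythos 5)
Your proof is correct and follows essentially the same route as the paper: both reduce the statement to right quasi-regularity via Corollary \ref{ji} and Proposition \ref{regular}, and both hinge on the same key observation that a quasi-inverse $t$ of $z\in I(e)$ in the ring $A(e)$ satisfies $t=z\beta t-z\in I(e)$ because $I(e)$ is a right ideal of $A(e)$. The paper merely organizes the argument as two separate inclusions (using for the reverse one that $J(I)GI$ is a right quasi-regular right ideal of $A$, hence contained in $J(A)$) instead of your single elementwise transfer statement, but the underlying ideas coincide.
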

\begin{proof}
Let $K=\{x\in I\ |\ xGI\subseteq J(A)\}$ and $x\in K,$ $\alpha\in
G,$ $a\in I.$ We claim that $x\alpha a$ is right quasi-regular in
$I.$ According to Proposition \ref{regular}, we may assume that
$(x\alpha a)\gamma(x\alpha a)\neq0$ and addable with $x\alpha a,$
for all $\gamma\in G,$ since every nilpotent element is right
quasi-regular (nilpotent element is defined as in the case of
ordinary gamma rings). Hence, $e=\delta(x\alpha a)$ is an
idempotent of $\Delta^*$ and $x\alpha a\in J(A)$ is right
quasi-regular in $A(e).$ This means that there exists $z\in A(e)$
such that $x\alpha a+z-(x\alpha a)\gamma z=0$ for all $\gamma\in
G,$ and hence, $z\in I.$ Thus, $x\alpha a$ is rqr in $I,$ and
according to Corollary \ref{ji}, $x\in J(I).$ Hence, $K\subseteq
J(I).$ Conversely, it suffices to show that $J(I)GI\subseteq
J(A),$ and since $J(I)GI$ is a right ideal of $A,$ it is enough to
prove that every $x\in J(I)GI$ is rqr in $A.$ Again, we may assume
that $x\gamma x\neq0$ and addable with $x,$ for all $\gamma\in G.$
Then $e=\delta(x)$ is an idempotent of $\Delta^*.$ Element $x$ is
rqr in $I(e),$ and hence for all $\gamma\in G,$ there exists $y\in
I(e)$ such that $x+y-x\gamma y=0,$ and hence, $x$ is rqr in $A.$
\end{proof}
\begin{cor}
For an ideal $I$ of a regular $G$-anneid $A,$ $J(I)=I\cap J(A).$
\end{cor}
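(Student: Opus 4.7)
The plan is to prove the two inclusions separately, using the preceding proposition for one direction and reducing to Proposition~\ref{j2} for the other.

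For the easy inclusion $I\cap J(A)\subseteq J(I)$, I would take $x\in I\cap J(A)$ and observe that since $J(A)$ is an ideal of $A$, we have $xGI\subseteq J(A)GA\subseteq J(A)$. The characterization $J(I)=\{x\in I\mid xGI\subseteq J(A)\}$ from the previous proposition then gives $x\in J(I)$.

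For the harder inclusion $J(I)\subseteq J(A)$, I would use Corollary~\ref{ji}: to show $x\in J(A)$, it suffices to prove that $x\alpha a$ is right quasi-regular in $A$ for every $\alpha\in G$ and $a\in A$. Given $x\in J(I)$, since $I$ is two-sided we have $x\alpha a\in I$, and by associativity $(x\alpha a)GI=xG(aGI)\subseteq xGI\subseteq J(A)$ (using $aGI\subseteq I$), so $x\alpha a$ also lies in $J(I)$. Thus the problem reduces to showing that every $y\in J(I)$ is right quasi-regular in $A$. Invoking Proposition~\ref{regular}, this in turn reduces to the case where $e:=\delta(y)$ is a $\beta$-idempotent of $\Delta^*$ for some $\beta\in G$, and we must verify that $y$ is $\beta$-rqr in the $\{\beta\}$-ring $A(e)$.

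At this point I would apply Proposition~\ref{j2} to the $G$-anneid $I$ itself (regularity of $I$ is inherited from $A$, since the implication $0\neq x\alpha a\#x\beta b\neq0\Rightarrow\alpha\#\beta\wedge a\#b$ for elements of $I$ is a special case of the same implication in $A$). This yields $J(I(e))=J(I)\cap I(e)$, where $I(e)=I\cap A(e)$. Since $y\in J(I)\cap I(e)$, we get $y\in J(I(e))$, so there exists $z\in I(e)$ with $y+z-y\beta z=0$. Because $z\in I(e)\subseteq A(e)$, the same identity witnesses that $y$ is $\beta$-rqr in $A(e)$, completing the proof.

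The main obstacle is the bookkeeping surrounding the passage to the addibility subgroup at an idempotent $e$: one must confirm that $I(e)=I\cap A(e)$ is simultaneously the addibility subgroup of $I$ at $e$ and an $\{\beta\}$-subring of $A(e)$, and that a quasi-inverse found inside $I(e)$ automatically serves as a quasi-inverse in $A(e)$. Once this identification is settled, the proof is a clean combination of Corollary~\ref{ji}, Proposition~\ref{regular}, Proposition~\ref{j2}, and the previous proposition.
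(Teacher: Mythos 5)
Your overall architecture is sound: the inclusion $I\cap J(A)\subseteq J(I)$ follows exactly as you say from the preceding proposition together with the fact that $J(A)$ is a right ideal of $A$ (so $xGI\subseteq J(A)GA\subseteq J(A)$), and the reduction of the other inclusion via Corollary~\ref{ji} is correct, since for a two-sided $I$ one has $a\gamma i\in AGI\subseteq I$ and hence $(x\alpha a)GI\subseteq xGI\subseteq J(A)$, so $x\alpha a\in J(I)$. The genuine gap is in the step where you apply Proposition~\ref{j2} to the $G$-anneid $I$: that proposition requires $e$ to be a $\beta$-idempotent of the grade set \emph{of $I$}, and this does not follow from $e$ being a $\beta$-idempotent of $\Delta^*$ for $A$. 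The ternary operation on grades of $I$ is computed from the products $I_\xi G_d I_\eta$, and by the paper's convention it takes the value $0$ whenever such a product is $\{0\}$; thus if $I(e)\beta I(e)=\{0\}$ (which is perfectly compatible with $A(e)\beta A(e)\neq\{0\}$ of grade $e$), the grade of $y$ is \emph{not} a $\beta$-idempotent for $I$ and Proposition~\ref{j2} is silent. Fortunately this exceptional case is precisely $y\beta y=0$, in which $y$ is nilpotent in the $\{\beta\}$-ring $A(e)$ and hence $\beta$-rqr there; and when $y\beta y\neq0$ one does get $I(e)\beta I(e)\neq\{0\}$ with grade $e$, so your application of Proposition~\ref{j2} to $I$ is then legitimate (regularity of $I$ is indeed inherited, as you note). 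So the argument is correct once this one-line case distinction is inserted.

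You can avoid the bookkeeping for $I$ altogether by staying inside $A$: for $y\in J(I)$ and $\beta\in G$ with $e=\delta(y)$ a $\beta$-idempotent of $\Delta^*$, observe that $y\beta y\in yGI\subseteq J(A)$ because $y\in I$, hence $y\beta y\in J(A)\cap A(e)=J(A(e))$ by Proposition~\ref{j2} applied to $A$. In the ring $A(e)$ the circle operation $a\circ b=a+b-a\beta b$ is associative and satisfies $y\circ(-y)=y\beta y$, so a right quasi-inverse $t$ of $y\beta y$ yields the right quasi-inverse $(-y)\circ t$ of $y$. This shows every element of $J(I)$ is right quasi-regular in $A$, so $J(I)$ is a right quasi-regular right ideal of $A$ and therefore $J(I)\subseteq J(A)$; combined with $J(I)\subseteq I$ this finishes the corollary with no reference to the grade structure of the sub-anneid $I$.
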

\section{Relation between the large Jacobson radical and the Jacobson radical}
\noindent Let $A$ be a $G$-anneid and $R$ the corresponding graded
$\Gamma$-ring. In the next proposition we give some connections
between the Jacobson radical of $A,$ the large Jacobson radical of
$A,$ and the Jacobson radical of $R.$ For the corresponding
relations for anneids, see \cite{ha}. By the Jacobson radical of
$R$ we mean the notion as defined in \cite{rs}.
\begin{proposition}
If $J(A)$ is the Jacobson radical of $A,$ $J_l(A)$ the large
Jacobson radical of $A$ and $J(R)$ the Jacobson radical of $R,$
then:
\begin{itemize}
    \item[$i)$] $J_l(A)=J(R)\cap A;$
    \item[$ii)$] $J_l(A)\subseteq J(A).$
\end{itemize}
\end{proposition}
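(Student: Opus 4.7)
The plan is to dispatch (ii) by a direct class-inclusion argument and to handle (i) through a correspondence between irreducible $AG$-moduloids $M$ and $R\Gamma$-modules via the linearization $M \mapsto \overline{M}$, matching annihilators on both sides.

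For (ii), every irreducible regular $AG$-moduloid is in particular an irreducible $AG$-moduloid, so the family whose annihilators are intersected to define $J_l(A)$ contains the family defining $J(A)$. Intersecting over a larger family yields a smaller ideal, giving $J_l(A) \subseteq J(A)$. This part is essentially immediate from the definitions and uses nothing about regularity.

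For (i), the key intermediate step is to show that if $M$ is an irreducible $AG$-moduloid, then $\overline{M}$ is an $R\Gamma$-module whose homogeneous part is $M$, and that $AG$-submoduloids of $M$ correspond bijectively to graded $R\Gamma$-submodules of $\overline{M}$. In particular $\overline{M}$ has no nonzero proper graded submodules. Moreover, for $x \in A$, $x$ annihilates $M$ if and only if $x$ annihilates $\overline{M}$, since $\overline{M}$ is additively generated by $M$ and the action is additive. The inclusion $J(R) \cap A \subseteq J_l(A)$ then follows by verifying that $\overline{M}$ is actually irreducible as an $R\Gamma$-module in the sense of \cite{cl}: any nonzero $R\Gamma$-submodule of $\overline{M}$ projects, via taking homogeneous parts, onto a nontrivial submoduloid of $M$, which is impossible. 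Conversely, for $J_l(A) \subseteq J(R) \cap A$, given $x \in J_l(A) \subseteq A$ and an arbitrary irreducible $R\Gamma$-module $V$ with $xV \neq 0$, I would extract a graded subquotient of $V$ carrying an irreducible $AG$-moduloid structure on which $x$ still acts nontrivially, contradicting $x \in J_l(A)$.

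The main obstacle is precisely this last extraction: reconciling graded-irreducibility with genuine $R\Gamma$-irreducibility. Concretely, one must show that $J(R) \cap A$ is captured by graded irreducible modules, equivalently that $J(R)$ is itself a graded ideal of $R$. I would establish this directly from the definition in \cite{cl}, decomposing the right quasi-regularity witnesses (or the annihilator ideals of irreducible modules) into their homogeneous components using the structural conditions \eqref{1} and \eqref{2}, which guarantee that homogeneous components behave consistently under the $\Gamma$-multiplication. Once $J(R)$ is known to be graded, its intersection with $A$ equals the intersection of all annihilators of graded irreducible $R\Gamma$-modules with $A$, which by the correspondence above is $J_l(A)$.
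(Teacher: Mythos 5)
Part $ii)$ of your proposal is correct and is exactly the paper's argument. Part $i)$, however, hinges on a claim that is unjustified and in fact false in general: that $\overline{M}$ is irreducible as an (ungraded) $R\Gamma$-module whenever $M$ is an irreducible $AG$-moduloid. Your justification --- a nonzero $R\Gamma$-submodule $W$ of $\overline{M}$ ``projects, via taking homogeneous parts, onto a nontrivial submoduloid of $M$, which is impossible'' --- does not give a contradiction: the homogeneous components of the elements of $W$ do generate a nonzero submoduloid of $M$, but irreducibility of $M$ only forces that submoduloid to be all of $M$; it does not force $W=\overline{M}$. Graded-simple does not imply simple (already $k[x,x^{-1}]$, viewed as a $\mathbb{Z}$-graded module over itself, is graded-simple but has the proper nonzero submodule generated by $x-1$), so the inclusion $J(R)\cap A\subseteq J_l(A)$ cannot be obtained this way. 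Your proposed repair --- showing that $J(R)$ is a graded ideal by decomposing quasi-regularity witnesses into homogeneous components --- is not a routine verification either: quasi-inverses do not decompose componentwise, gradedness of the Jacobson radical is a delicate (and in this generality unavailable) statement, and even if it held, $J(R)\cap A$ would not automatically coincide with the intersection of annihilators of the graded irreducible modules. None of this machinery is needed for the proposition.

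The paper proves $J(R)\cap A\subseteq J_l(A)$ elementwise, using the quasi-regularity characterization of $J(R)$ from \cite{cl} instead of any simplicity statement about $\overline{M}$: if $a\in J(R)\cap A$ and $M$ is an irreducible $AG$-moduloid with $x\alpha a\neq0$ for some $x\in M$ and $\alpha\in G$, then $x\alpha a$ is a strict generator of $M$, so $x=x\alpha a\alpha m$ for some $m$; consequently $y-a\alpha m\alpha y\in(0:x)_\alpha$ for all $y\in R$, and the right quasi-regularity of $a\alpha m\in J(R)$ then places $a\alpha m$ itself in $(0:x)_\alpha$, forcing $x=x\alpha a\alpha m=0$, a contradiction. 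You should replace your irreducibility claim by this argument. For the reverse inclusion $J_l(A)\subseteq J(R)\cap A$ no ``graded subquotient extraction'' is needed either: every simple $R\Gamma$-module $V$ is already an irreducible $AG$-moduloid (give $V$ the trivial graduation, so that all elements are addible; since $G$ and $A$ additively generate $\Gamma$ and $R$, a subgroup of $V$ closed under the $G$--$A$ action is an $R\Gamma$-submodule), whence every element of $J_l(A)$ annihilates every simple $R\Gamma$-module and so lies in $J(R)$.
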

\begin{proof}
$i)$ Every simple $R\Gamma$-module $V$ can be viewed as an
irreducible $AG$-moduloid, and hence $J_l(A)\subseteq J(R).$ Thus,
$J_l(A)\subseteq J(R)\cap A.$ Conversely, suppose $a\in J(R)\cap
A.$ It is enough to prove that $a\in(0:M)$ if $M$ is an
irreducible $AG$-moduloid. Assume that $a\notin(0:M).$ Then there
exist $x\in M,$ $\alpha\in G$ and $a\in A$ such that $x\alpha
a\neq0.$ Since $M$ is an irreducible $AG$-moduloid, $x\alpha a$ is
a strict generator and it follows that $\overline{M}=x\alpha
a\gamma R,$ for all $\gamma\in\Gamma.$ Let $m\in R$ be such that
$x=x\alpha a\alpha m.$ Then, for all $y\in R$ and $\beta\in\Gamma$
we have $x\beta y=x\alpha a\alpha m\beta y,$ and particularly,
$x\alpha(y-a\alpha m\alpha y)=0,$ that is $y-a\alpha m\alpha
y\in(0:x)_\alpha.$ Since $a\in J(R),$ $a\alpha m$ also belongs to
$J(R).$ For all $\beta\in\Gamma$ there exists $z\in R$ such that
$a\alpha m+z-a\alpha m\beta z=0$ which implies $z-a\alpha m\beta
z=-a\alpha m,$ and since $y-a\alpha m\alpha y\in(0:x)_\alpha,$ for
all $y\in R,$ we have $a\alpha m\in(0:x)_\alpha.$ Thus, $x=x\alpha
a\alpha m=0,$ a contradiction.\\
$ii)$ It is obvious that $J_l(A)\subseteq J(A).$
\end{proof}
\section*{Acknowledgement}
\noindent The author would like to express his big gratitude to
Professor Mirjana Vukovi\'{c} who introduced him to the theory of
general graded rings and for providing him with the substantial
amount of the literature, in particular, with \cite{ha}.

\noindent Emil Ili\'{c}-Georgijevi\'{c}\\
Faculty of Civil Engineering, University of Sarajevo\\
Patriotske lige 30, 71000 Sarajevo, Bosnia and Herzegovina\\
E-mail: emil.ilic.georgijevic@gmail.com
\end{document}